  \numberwithin{equation}{section}
\newcommand{\Z}{\mathbb{Z}}
\newcommand{\N}{\mathbb{N}}         % natural numbers
  \newcommand{\R}{\mathbb{R}}         % real line
  \newcommand{\Rn}{\R^n}              % Euclidean space
  \newcommand{\Le}{\mathcal{L}}       % Lebesgue measure
  \newcommand{\Pa}{\mathcal{P}}       % packing measure
  \newcommand{\Has}{\mathcal{H}^s}            % s-dimensional Hausdorff measure
\newcommand{\iii}{\mathtt{i}}
\newcommand{\closuredOmega}{\overline{\Omega}_d} % metric completion of \Omega w.r.t. d
\newcommand{\intbndryOmega}{\partial\Omega_d} % internal boundary of \Omega
\newcommand{\closurerhoOmega}{\overline{\Omega}_\rho} % metric completion of \Omega w.r.t. d_\rho
  \DeclareMathOperator{\dist}{dist}
  \DeclareMathOperator{\diam}{diam}
  \DeclareMathOperator{\length}{\ell}
  \DeclareMathOperator{\Dim}{Dim}
  \newtheorem{thm}{Theorem}[section]
  \newtheorem{lemma}[thm]{Lemma}
  \theoremstyle{remark}
  \newtheorem{rem}[thm]{Remark}
  \newtheorem{rems}[thm]{Remarks}
  \newtheorem{ex}[thm]{Example}
  \newtheorem{q}[thm]{Question}
  \newtheorem{definition}[thm]{Definition}
  \newtheorem{assu}[thm]{Assumption}
\newcounter{minutes}\setcounter{minutes}{\time}
\newcounter{hours}\setcounter{hours}{\time}
\begin{document}

\author{Riku Kl\'en}
\author{Ville Suomala}
\title{Dimension of the boundary in different metrics}

\address{Department of Mathematics and Statistics\\
FI-20014 University of Turku\\
         Finland}

\address{Department of Mathematical sciences\\
         P.O Box 3000 \\
         FI-90014 University of Oulu\\
         Finland}
\email{riku.klen@utu.fi}
\email{ville.suomala@oulu.fi}

\keywords{Hausdorff dimension, packing dimension, conformal metric,
  density metric}
\subjclass[2000]{Primary 28A78, Secondary 30C65}

\begin{abstract}
We consider metrics on Euclidean domains $\Omega\subset\R^n$ that are induced by
continuous densities $\rho\colon\Omega\rightarrow(0,\infty)$ and study
the Hausdorff and packing dimensions of the boundary of $\Omega$
with respect to these metrics.
\end{abstract}

\maketitle

%\begin{center}
%\texttt{File:~\jobname .tex, printed:
%\number\year-\number\month-\number\day,
%\thehours.\ifnum\theminutes<10{0}\fi\theminutes}
%\end{center}

\tableofcontents

\section{Introduction}

Let $\Omega\subset\Rn$ be a domain. For $x,y\in\Omega$, we
denote by $d(x,y)$ the internal Euclidean distance between $x$ and
$y$ defined as
\[d(x,y)=\inf_{\gamma}\length(\gamma),\]
where the infimum is taken over all rectifiable curves in $\Omega$ with
endpoints $x$ and $y$ and $\length$ refers to the standard Euclidean
length. It is well known and easy to see that $d$ defines a metric on
$\Omega$ called the internal metric. Furthermore, we may extend
this metric to the internal boundary
$\intbndryOmega= \closuredOmega \setminus{\Omega}$, where
$\closuredOmega$ is the standard metric completion of $\Omega$ with
respect to $d$.

Let $\rho\colon\Omega\rightarrow(0,\infty)$ be a continuous
function. We define the $\rho$-length of a rectifiable curve
$\gamma\subset\Omega$ 
as
\[\length_\rho(\gamma)=\int_{\gamma}\rho(z)|dz|\]
where $|dz|$ denotes integration with respect to arclength. The
$\rho$-distance between $x,y\in\Omega$ is then given by
\[d_\rho(x,y)=\inf_{\gamma}\length_\rho(\gamma),\]
where the infimum is again over all curves
joining $x$ to $y$ in $\Omega$.
This defines a metric on $\Omega$ and as with the internal metric, we
may extend it to the $\rho$-boundary of $\Omega$ defined as
$\partial_\rho\Omega= \closurerhoOmega \setminus\Omega$, where
$\closurerhoOmega$ is the standard metric completion of
$\Omega$ with respect to $d_\rho$. Observe that the internal metric $d$ corresponds to $d_\rho$ for the constant function $\rho\equiv 1$.

Thus, given $\rho$ as above (\emph{a density} in what follows), we
have two complete metric spaces $( \closuredOmega ,d)$ and  
$( \closurerhoOmega ,d_\rho)$ which need not be topologically
equivalent. For simplicity, however, we only deal with cases in which
$\partial_\rho\Omega$ may be
naturally identified with a metric subspace of $\intbndryOmega$. 

In this paper, we will consider $\dim_\rho(\partial_\rho\Omega)$ and
$\Dim_\rho(\partial_\rho\Omega)$, the Hausdorff and packing dimensions
of $\partial_\rho\Omega$ with respect to $d_\rho$ 
(For more comprehensive notation and definitions, we refer to Section
\ref{sec:notation} below). 
Classically, this sort of problems arise in connection to
harmonic measures and the boundary behaviour of conformal maps \cite{Mak, Mak89, P, GM, 
KPT}. In that setting, $\rho=|f'|$ for a conformal map $f$ and
$d_\rho$ corresponds to the internal metric on the image domain. The
Hausdorff dimension, $\dim_\rho(\partial_\rho\Omega)$, has been
analysed also for a much larger collection of so called \emph{conformal
densities} on the unit ball $\mathbb{B}^n\subset\mathbb{R}^n$. See
\cite{BKR, BK, Nie}. 
Although we provide some estimates in the setting of conformal densities, our main goal is to study general densities defined on
John domains in $\Rn$, and to provide tools to estimate
the values of the dimensions $\dim_\rho(\partial_\rho\Omega)$ and $\Dim_\rho(\partial_\rho\Omega)$. Because of this, our methods are perhaps more geometric than analytic.

Given $A\subset \closuredOmega$, we denote by $d(x,A)=\inf_{a\in A}d(x,a)$
the internal distance from $x$ to $A$ and, moreover, abbreviate
$d(x)=d(x,\intbndryOmega)$. Of course, $d(x)$ is just the Euclidean distance to the boundary of $\Omega$.

Let us consider the following simple example: Suppose that
$\Omega\subsetneq\R^n$ has smooth boundary, $-1<\beta<0$, and define a 
density $\rho(x)=d(x)^\beta$. Then it is well known, and easy to
see that $\partial_\rho\Omega$ is a ``snowflake''. More precisely,
$d_\rho(x,y)\approx d(x,y)^{1+\beta}$ for all
$x,y\in\partial_\rho\Omega$. Thus, the effect of $\rho$ on the dimensions
of the boundary is described by a power law
\[\Dim_d(\intbndryOmega)/\Dim_\rho(\partial_\rho\Omega)
=\dim_d(\intbndryOmega)/\dim_\rho(\partial_\rho\Omega)=1+\log\rho(x)/\log d(x).\]
Keeping this example in mind, it is now natural to consider (the upper
and lower) limits of the quantity $\log\rho(y)/\log d(y)$ as $y$
approaches the boundary of $\Omega$. Under sufficient assumptions,
this leads to multifractal type formulas for the dimension of
$\partial_\rho\Omega$. For instance, we obtain the
following result.
\begin{thm}\label{thm:intro}
Let $\Omega\subset\R^n$ be a John domain and $\rho > c > 0$ a density. Suppose that
\[
  i(x)=\lim_{y \in \Omega, \, y\rightarrow x} \frac{\log\rho(y)}{\log d(y)}
\]
exists at all points $x\in\intbndryOmega$ and satisfies $i(x) > -1$. Then
\[
  \dim_\rho(\partial_\rho\Omega) = \sup_{\beta>-1}
  (1+\beta)^{-1}\dim_d(\{x\in\intbndryOmega\,:i(x)\le\beta\}) 
\]
An analogous formula holds for the packing dimension.
\end{thm}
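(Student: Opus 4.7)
The strategy is to decompose $\intbndryOmega$ into countably many level sets on which $i$ is essentially constant and, on each such piece, establish a two-sided H\"older comparison between $d$ and $d_\rho$ with exponent close to $1+\beta$, where $\beta$ is the constant value of $i$. The dimension formula then follows from the behavior of $\dim_d$ and $\dim_\rho$ under H\"older maps combined with their countable stability.

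For small $\epsilon,\eta>0$ and integers $k$ with $k\epsilon>-1$, set
\[
A_k=\{x\in\intbndryOmega\,:\,(k-1)\epsilon<i(x)\le k\epsilon\},
\]
so that countably many $A_k$ cover $\intbndryOmega$. Using the definition of $i(x)$ together with $\rho\ge c>0$, one obtains for each $k$ a relative $d$-neighborhood $V_k\subset\Omega$ of $A_k$ on which
\[
d(y)^{k\epsilon+\eta}\lesssim \rho(y)\lesssim d(y)^{(k-1)\epsilon-\eta}.
\]

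The key step is to use the John condition to derive, for $u,v\in A_k$ sufficiently $d$-close,
\[
d(u,v)^{1+k\epsilon+\eta}\lesssim d_\rho(u,v)\lesssim d(u,v)^{1+(k-1)\epsilon-\eta}.
\]
For the upper bound on $d_\rho$: take interior approximants $u',v'$ at depth $\asymp d(u,v)$, join each to the John center by a John curve (so that $d(y)$ is comparable to arclength from the endpoint along the curve), and splice. On each piece the $\rho$-length is dominated by $\int_0^{d(u,v)}t^{(k-1)\epsilon-\eta}\,dt\asymp d(u,v)^{1+(k-1)\epsilon-\eta}$; finiteness of this integral relies precisely on $(k-1)\epsilon-\eta>-1$, which is what the hypothesis $i(x)>-1$ buys. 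For the lower bound, any rectifiable curve from $u'$ to $v'$ must contain a subarc of Euclidean length $\gtrsim d(u,v)$ lying in $V_k$ with $d(\cdot)\lesssim d(u,v)$, and integrating the lower estimate on $\rho$ along this subarc yields the stated bound.

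Since both the Hausdorff and packing dimensions are countably stable and scale as $\dim(f(A))\le \dim(A)/\alpha$ under $\alpha$-H\"older maps, the comparison gives
\[
\frac{\dim_d(A_k)}{1+k\epsilon+\eta}\le \dim_\rho(A_k)\le \frac{\dim_d(A_k)}{1+(k-1)\epsilon-\eta},
\]
and similarly for $\Dim_\rho$. Combining via $\dim_\rho(\partial_\rho\Omega)=\sup_k\dim_\rho(A_k)$, regrouping by the sublevel sets $\{x\in\intbndryOmega:i(x)\le\beta\}$, and sending $\epsilon,\eta\to 0$ recovers the claimed supremum in both directions. The hardest ingredient will be the geometric path construction, particularly the lower bound: one must show that \emph{every} competitor curve genuinely spends enough arclength in the near-boundary region $V_k$, which draws on both the John property of $\Omega$ and the topological hypothesis that $\partial_\rho\Omega$ embeds into $\intbndryOmega$. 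Additional care is needed to obtain uniform constants as $\beta\to -1$, where the John-integral estimate degenerates and one must invoke compactness or continuity properties of $i$.
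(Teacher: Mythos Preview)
Your approach is essentially the paper's: it proves the result as a special case of a general multifractal inequality (Theorem~\ref{estimates2}), whose proof rests on the same two ingredients you isolate---a one-sided H\"older estimate via John cigars and a lower bound from the pointwise density control---followed by a countable decomposition and stability of dimensions. A few remarks on places where your sketch is imprecise or more complicated than necessary:

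\emph{Lower bound.} Your formulation ``any curve from $u'$ to $v'$ contains a subarc of length $\gtrsim d(u,v)$ lying in $V_k$ with $d(\cdot)\lesssim d(u,v)$'' is shakier than needed. A curve can leave $V_k$ (a neighbourhood of the whole level set $A_k$) by passing close to parts of $\intbndryOmega$ where $i$ is different. The clean argument is localized at a single point: note first that $\rho>c$ forces $i(x)\le 0$ everywhere, so only exponents $s<0$ matter. For such $s$ and $u\in A_k$, any curve $\gamma$ starting at $u$ has $d(\gamma(t))\le t$; provided $\gamma$ stays in the ball $B_d(u,q_u)$ where $\rho\ge d^{s}$ holds, one gets $\rho(\gamma(t))\ge t^{s}$ and hence $\ell_\rho(\gamma)\gtrsim d(u,y)^{1+s}$ directly. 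That the curve stays in $B_d(u,q_u)$ is itself forced by the same integral estimate (otherwise $\ell_\rho$ is already large). This is exactly Lemma~\ref{thm:estimates1}\,\eqref{1}. The border case $\beta=0$ uses $d_\rho\ge c\,d$ from $\rho\ge c$; your integration of $d^{k\epsilon+\eta}$ with a positive exponent over a subarc where only $d\lesssim d(u,v)$ is known does not give a lower bound.

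\emph{Upper bound.} Splicing two John cones at the John centre overshoots; you need to connect at height $\asymp d(u,v)$, which requires knowing the two cones come close there. The paper handles this via a covering lemma (Lemma~\ref{lemma:john}): $B_d(x,r)$ is covered by boundedly many $B_{\alpha/2}$-balls, and on each such ball points are joined by $(\alpha/2)$-cigars of length $\lesssim r$, along which one integrates $\rho\le d^{s}$ to get $d_\rho\lesssim r^{1+s}$.

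\emph{Uniformity near $\beta=-1$.} No additional care is needed. The H\"older constants do blow up as $\beta\downarrow -1$, but Hausdorff and packing dimensions are invariant under bi-Lipschitz maps with arbitrary constants, so the estimates $\dim_\rho(A_k)\asymp\dim_d(A_k)/(1+k\epsilon)$ hold term-by-term and the countable supremum assembles the formula without any compactness or continuity of $i$.
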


This theorem is a simple special case of a more general result,
Theorem \ref{estimates2}, and it can be used to obtain a formula
for the dimensions $\dim_\rho(\partial_\rho\Omega)$ and
$\Dim_\rho(\partial_\rho\Omega)$ in many situations. 
A generic case is the following: $\Omega=\mathbb{B}^n$,
$C\subset\partial\mathbb{B}^n$ is a Cantor set with $0<\dim_d C<\Dim_d
C<n$ and $\rho(x)=d(x,C)^\beta$ for some $\beta>-1$ (Example
\ref{ex:Cantor}).  

In Theorem \ref{thm:intro}, there is
an annoying 
lack of generality since we have to consider inner limits in
the definition of $i(x)$. The situation is different if we know
that the distance $d_\rho(x,y)$ between points
$x,y\in\partial_\rho\Omega$ is realised along curves that are
``non-tangential''. If the density satisfies a suitable Harnack inequality
together with a Gehring-Hayman type estimate, then it is enough to
consider limits along some fixed cones. For
conformal densities, for instance, we may replace the quantity
$i(x)$ by a radial version
$k(x)=\lim_{t\uparrow1}\log\rho(tx)/\log(1-t)$; see Section
\ref{conf} where we actually consider upper and lower limits as
$t\uparrow1$.  

Section \ref{erq} contains several examples and some open
questions. Most notably, in Example \ref{ex:multifractal} we construct
a new nontrivial example of a conformal density with multifractal type
boundary behavior.

As our results indicate, a careful inspection of the power exponents and the
size of certain sub and super level sets of these quantities can be
used to study the dimensions $\dim_\rho(\partial_\rho\Omega)$ and 
$\Dim_\rho(\partial_\rho\Omega)$. Although the main idea in most of our
results is the same, it is perhaps not possible to find a general
statement which would fit into all, or even most, of the interesting
situations. Often, a suitable case study and a combination of
different ideas is needed in order to deduce the relevant information
(for instance, see Examples \ref{ex:Koch}, \ref{hassu}, and
\ref{ex:multifractal}).  We strongly
believe that the ideas we have used can be applied also elsewhere,
beyond the results of this paper.

\section{Notation}

\label{sec:notation}

Let $\Omega\subset\Rn$ be a domain. For technical reasons, we want to
be able to naturally identify $\partial_\rho\Omega$ with a subset of $\intbndryOmega$. To ensure this, we assume throughout this paper that
for all sequences
$(x_i)$, $x_i\in\Omega$, the following two conditions are satisfied:
\begin{gather}\label{ass1}
\text{If }(x_i)\text{ converges in } \closurerhoOmega ,\text{ then it
  converges in } \closuredOmega.\tag{A1}\\
\label{ass2}
\text{If }(x_i) \text{ converges in }\closuredOmega,\text{ it has at
most one accumulation point in }\partial_\rho\Omega.\tag{A2}
\end{gather}
In other words, \eqref{ass1} means that the identity mapping
$(\Omega,d_\rho) \to (\Omega,d)$ has a continuous extension $f \colon
\closurerhoOmega \to \closuredOmega$ and, furthermore, \eqref{ass2}
means that this $f$ 
 is injective.

\begin{definition}\label{def:density}
\emph{A density} is a continuous function $\rho\colon\Omega\rightarrow(0,\infty)$
satisfying \eqref{ass1} and \eqref{ass2}. For simplicity, we also
require that $\partial_\rho \Omega \neq\emptyset$. 
\end{definition}

Whenever we talk about a curve $\gamma$, we
assume that it is rectifiable, is arc-length parametrized, and that
$\gamma(t)\in\Omega$ for all $0<t<\length(\gamma)$ (the endpoints may
or may not belong to $\intbndryOmega$). Note that the internal length of a curve equals the Euclidean length of the curve.
We say that $\Omega\subset\Rn$ is an $\alpha$-John
  domain for $0<\alpha\le 1$, if there is $x_0\in\Omega$ such that all points $x\in\Omega$ may be
joined to $x_0$ by an $\alpha$-cone, i.e. by a curve
$\gamma$ joining $x$ to $x_0$
such that $d(\gamma(t))\geq \alpha\,t$ for all $0\leq
t\leq\length(\gamma)$. If $\alpha$ is not important, we simply talk about
John domains. 
Let $\gamma\subset\Omega$ be a curve. We say
that $\gamma$ is an $\alpha$-cigar if
\begin{equation}\label{sikari}
d(\gamma(t))\geq\alpha
\min\{t,\length(\gamma)-t\}\text{ for all }0\leq t\leq\length(\gamma).
\end{equation}
For technical purposes, we define an
$\alpha$-distance between points $x,y\in\Omega$ as
\[d_\alpha(x,y)=\inf_{\gamma}\length(\gamma)\]
and this time the infimum is taken over all $\alpha$-cigars $\gamma$
joining $x$ and $y$.
It is easy to see that if $\Omega$ is an $\alpha$-John domain, then any two points
$x,y\in\closuredOmega$ 
may be joined by an $\alpha$-cigar. Thus $d_\alpha(x,y)<\infty$ for
all $x,y\in\closuredOmega$. 
Note however that $d_\alpha$ is not
necessarily a metric since it may be infinite and even if it happens to be finite, it may fail to satisfy the triangle
inequality.

Let $X=(X,d_X)$ be a separable metric space. We denote balls
$B_X(x,r)=\{y\in X\,:\,d_X(y,x)<r\}$ and spheres $S_X(x,r)=\{y\in
X\,:\,d_X(x,y)=r\}$. Given $A\subset X$, we define its $s$-dimensional
Hausdorff and packing measures, $\mathcal{H}^{s}_X(A)$ and
$\mathcal{P}^{s}_X(A)$, respectively, by the following procedure:

\begin{align*}
\mathcal{H}^{s,\varepsilon}_X(A)&=\inf \left\{ \sum_{i=1}^\infty \diam_X(A_i)^s \colon A \subset \bigcup_{i\in\N}
A_i\text{ and }\diam_X(A_i)<\varepsilon\text{ for all }i \right\} ,\\
\Has_X(A)&=\lim_{\varepsilon\downarrow
  0}\mathcal{H}^{s,\varepsilon}_X(A),\\
P^{s,\varepsilon}_X(A)&=\sup \left\{ \sum_{i=1}^{\infty}r_{i}^s \colon
\{B_X(x_i,r_i)\}\text{ is a packing of }A\text{ with }r_i\le\varepsilon
\right\} ,\\
P^{s}_X(A)&=\lim_{\varepsilon\downarrow 0}P^{s,\varepsilon}_X(A),\\
\Pa^{s}_X(A)&=\inf \left\{ \sum_{i=1}^{\infty}P^{s}_X(A_i) \colon A\subset\bigcup_{i=0}^{\infty}
A_i \right\},
\end{align*}
where $0<\varepsilon,s<\infty$ and a packing of $A$ is a disjoint collection of
balls with centres in $A$.
We define the Hausdorff and packing dimensions of
$A\subset X$, respectively, as
\begin{align*}
\dim_X(A)&=\sup\{s\geq 0 \colon \Has_X(A)=\infty\}
=\inf\{s\ge0\,:\,\Has_X(A)=0\},\\
\Dim_X(A)&=\sup\{s\geq 0 \colon \Pa^{s}_X(A)=\infty\} =\inf\{s\ge 0\,:\,\Pa^{s}_X(A)=0\},
\end{align*}
with the conventions $\sup\emptyset=0$, $\inf\emptyset=\infty$.

When the domain $\Omega\subset\Rn$ 
has been fixed, we use all the
notation introduced above with the subscript $d$ when referring to the internal metric.
Moreover, given a density $\rho\colon\Omega\rightarrow(0,\infty)$, 
we use the subscript $\rho$ to refer to the corresponding notions in
terms of the
metric $d_\rho$. For example, given $x\in\closuredOmega$,
$y\in \closurerhoOmega$, and $r>0$ we have
$B_d(x,r)=\{z\in \closuredOmega\,:\,d(z,x)<r\}$ and
$S_\rho(y,r)=\{z\in \closurerhoOmega \,:\,d_\rho(z,y)=r\}$.
We also use the notation $B_\alpha(x,r)$ for balls in terms of the
``distance'' $d_\alpha$.
When referring to ``round'' Euclidean balls we use a subindex $e$, so
$B_e(x,r)=\{y\in\Rn\,:\,|y-x|<r\}$ where $|\cdot|$ is the usual
Euclidean distance. We also denote $\mathbb{B}^n=B_e(0,1)\subset \R^n$
and $S^{n-1}=S_e(0,1)\subset\R^n$.
Observe that if
$A\subset \closurerhoOmega$,
both notations $\diam_d(A)$ and $\diam_\rho(A)$ make sense, since by
\eqref{ass1} and \eqref{ass2}, if $x,y\in A$, then $d(x,y),
d_\rho(x,y)<\infty$ are well defined.

To finish this section, we introduce various limits that are used
later to obtain dimension bounds for $\partial_\rho\Omega$. 
For a domain $\Omega \subset \Rn$, a density $\rho$ and
$x\in\intbndryOmega$, we define
\begin{equation}\label{limiti}
i^-(x)=\liminf_{\underset{y\in\Omega}{y\rightarrow x}}
\frac{\log \rho(y)}{\log d(y)},\quad
i^+(x)=\limsup_{\underset{y\in\Omega}{y\rightarrow
  x}}\frac{\log\rho(y)}{\log d(y)},
\end{equation}
where the limits are considered with respect to the internal
metric. Observe that $i^+(x)\ge-1$ for all $x \in \partial_\rho \Omega$, but $i^{-}(x)$ does not
have to be bounded from below.

For a domain $\Omega \subset \Rn$, a density $\rho$ and $\beta>-1$, we define
\begin{eqnarray}
d^+(\beta) &=& \dim_d \{x\in\partial_\rho\Omega\,:\,i^+(x)\leq \beta\},\label{limitd1}\\
D^+(\beta) &=& \Dim_d \{x\in\partial_\rho\Omega\,:\,i^+(x)\leq \beta\},\\
d^-(\beta) &=& \dim_d \{x\in\partial_\rho\Omega\,:\,i^-(x)\leq \beta\},\\
D^-(\beta) &= & \Dim_d \{x\in\partial_\rho\Omega\,:\,i^-(x)\leq \beta\}\label{limitd4}.
\end{eqnarray}

For a density $\rho$ on
$\mathbb{B}^n$ and $x\in S^{n-1}$, we set
\begin{equation}\label{limitk}
k^-(x)=\liminf_{r\uparrow1}
\frac{\log\rho(rx)}{\log(1-r)},\quad
k^+(x)=\limsup_{r\uparrow1}\frac{\log\rho(rx)}{\log(1-r)}.
\end{equation}
Note that for $\Omega = \mathbb{B}^n$ we have $i^-(x) \le k^-(x) \le k^+(x) \le i^+(x)$ for $x\in S^{n-1}$.

Occasionally, we need to make the following technical assumption for the metric
$d_\rho$:
\begin{assu}\label{assu:gamma}
For each $x\in\partial_\rho\Omega$ and each $\varepsilon>0$, there is
$r>0$ such that for all $y\in B_\rho(x,r)$ there is a curve $\gamma$
joining $x$ to $y$ in $\Omega$ such that $h(\gamma)\ge d(x,y)^{1+\varepsilon}$ and
$\ell_\rho(\gamma)\le d_\rho(x,y)^{1-\varepsilon}$.
\end{assu}
Here $h(\gamma)=\sup_{y\in\gamma}d(y)$ is the maximal distance of
$\gamma$ from the boundary (the ``height'' of $\gamma)$. 
This assumption should be understood as a very mild monotonicity condition with respect to $d(x)$. It is used to obtain dimension lower bounds for the part of  $\partial_\rho\Omega$ where $i^+\ge0$. Close to such points, it is hard to obtain lower estimates for the $\rho$-length of curves that stay very close to $\partial\Omega$.
In fact, if the condition \eqref{assu:gamma} fails, it may happen that $\dim_\rho\partial_\rho\Omega=\Dim_\rho\partial_\rho\Omega=0$ even if $\Omega$ is a half-space, $\dim_d\partial_\rho\Omega>0$, and $i^+$ is uniformly bounded. See Example \ref{ex:assugamma}. 

The assumption \ref{assu:gamma} is a natural generalisation of the Gehring-Hayman condition valid for conformal densities, see
\eqref{GHtype2}.

We summarise our main notation in Table \ref{notation}.
\begin{table}
\begin{tabular}{ll}
  $\rho$ & density: A continuous
    $\rho\colon\Omega\rightarrow(0,\infty)$ satisfying \eqref{ass1}
    and \eqref{ass2}\\ 
  $\ell_\rho(\gamma)$ & the $\rho$-length of a rectifiable curve $\gamma$\\
  $d$ & internal metric\\
  $d_\rho$ & $\rho$-metric\\
  $d_\alpha$ & $\alpha$-distance\\
  $d(x)$ & Euclidean distance from $x$ to the boundary\\
  $B_d(x,r),S_d(x,r)$ & ball and sphere with respect to the internal metric $d$\\
  $B_\rho(x,r),S_\rho(x,r)$ & ball and sphere with respect to $d_\rho$\\
  $B_e(x,r),S_e(x,r)$ & ball and sphere with respect to the Euclidean distance\\
  $B_\alpha(x,r),S_\alpha(x,r)$ & ball and sphere with respect to $d_\alpha$\\
  $\dim_d$ & Hausdorff dimension with respect to $d$\\
  $\Dim_d$ & packing dimension with respect to $d$\\
  $\dim_\rho$ & Hausdorff dimension with respect to $d_\rho$\\
  $\Dim_\rho$ & packing dimension with respect to $d_\rho$\\
  $\closuredOmega$ & metric completion of $\Omega$ with respect to $d$\\
  $\intbndryOmega$ & internal boundary $\closuredOmega \setminus \Omega$\\
  $\closurerhoOmega$ & metric completion of $\Omega$ with respect to $d_\rho$\\
  $\partial_\rho\Omega$ & $\rho$-boundary $\closurerhoOmega\setminus\partial_\rho\Omega$.\\
  $i^\pm$, $k^\pm$ & limits used for dimension bounds
\end{tabular}
\caption{Notation.\label{notation}}
\end{table}

\section{Preliminary lemmas}\label{mainlemmas}

We start by recalling the following simple lemma giving estimates
on expansion and compression behaviour of H\"older type
maps.

\begin{lemma}\label{dimlemmaf}
Suppose that $Z$ and $Y$ are separable metric spaces and let $f\colon Z\rightarrow
Y$, $0<\delta<\infty$ and $X\subset Z$.
\begin{enumerate}
\item\label{dimlemma1} If for each $x\in X$ there are
  $0<r_x,C_x<\infty$ so that
$f(B_Z(x,r))\subset B_Y(f(x),C_x r^\delta)$ for all $0<r<r_x$, then
\begin{align}
\delta\dim_Y(f(X))&\leq \dim_Z(X),\label{hhest}\\
\delta\Dim_Y(f(X))&\leq \Dim_Z(X).\label{ppest}
\end{align}
\item\label{dimlemma2} If for each $x\in X$ there are $0<C_x<\infty$
  and a sequence
  $r_{x,i}>0$ such that $\lim_{i\rightarrow\infty}r_{x,i}=0$ and
$f(B_Z(x,r_{x,i}))\subset B_Y(f(x),C_x r_{x,i}^\delta)$ for all $i$,
then
\begin{equation}
\delta \dim_Y(f(X))\leq\Dim_Z(X).\label{hpest}
\end{equation}
\end{enumerate}
\end{lemma}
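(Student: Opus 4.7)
The standard reduction is to decompose $X$ into countably many pieces on which the Hölder-type constants are uniform, and then to exploit the countable stability of both Hausdorff and packing dimension. For part~(1) I would set $X_{j,k}=\{x\in X\,:\,r_x>1/j,\ C_x<k\}$, so that $X=\bigcup_{j,k\in\mathbb{N}}X_{j,k}$ and on each $X_{j,k}$ the uniform implication $f(B_Z(x,r))\subset B_Y(f(x),kr^\delta)$ holds for every $x\in X_{j,k}$ and every $0<r<1/j$. The Hausdorff bound \eqref{hhest} is then classical: given a cover $\{A_i\}$ of $X_{j,k}$ with $\diam_Z(A_i)<\varepsilon<1/j$, I would pick $x_i\in A_i\cap X_{j,k}$, use $A_i\subset B_Z(x_i,\diam_Z(A_i)+\eta)$ together with the uniform Hölder condition to obtain $\diam_Y(f(A_i))\le 2k\,\diam_Z(A_i)^\delta$, and conclude $\mathcal{H}^s_Y(f(X_{j,k}))\le (2k)^s\,\mathcal{H}^{s\delta}_Z(X_{j,k})$ after summing and sending $\varepsilon\to 0$.

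For the packing bound \eqref{ppest} I would argue by contradiction in the opposite direction. Given an $\varepsilon$-packing $\{B_Y(y_i,r_i)\}$ of $f(X_{j,k})$ with $y_i=f(x_i)$, $x_i\in X_{j,k}$, I set $\bar r_i=(r_i/k)^{1/\delta}/4$. If the $Z$-balls $B_Z(x_i,\bar r_i)$ were not pairwise disjoint, say $d_Z(x_i,x_j)<\bar r_i+\bar r_j$ with $\bar r_i\ge\bar r_j$ (hence $r_i\ge r_j$), then $x_j\in B_Z(x_i,2\bar r_i)$, and for small enough $\varepsilon$ this ball lies in $B_Z(x_i,1/j)$, so the uniform Hölder condition would force $d_Y(y_i,y_j)<k(2\bar r_i)^\delta=(1/2)^\delta r_i<r_i+r_j$, contradicting disjointness in $Y$. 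This yields $P^{s,\varepsilon}_Y(f(X_{j,k}))\le C(k,s,\delta)\,P^{s\delta,\bar\varepsilon}_Z(X_{j,k})$; the upgrade to the full measure rests on the observation that every countable $Z$-cover $\{V_l\}$ of $X_{j,k}$ pushes forward to a $Y$-cover $\{f(V_l)\}$ of $f(X_{j,k})$, so combining with the pre-measure bound on each piece gives $\mathcal{P}^s_Y(f(X_{j,k}))\le C(k,s,\delta)\,\mathcal{P}^{s\delta}_Z(X_{j,k})$, and countable stability yields \eqref{ppest}.

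For part~(2), only $C_x$ needs to be uniformly bounded, so I would set $X_k=\{x\in X\,:\,C_x<k\}$. Fix $s>\Dim_Z(X_k)/\delta$, so that $\mathcal{P}^{s\delta}_Z(X_k)=0$; for any $\eta>0$ this produces a countable cover $X_k\subset\bigcup_l Y_l$ with $\sum_l P^{s\delta}_Z(Y_l)<\eta$. For each $Y_l$ and small $\varepsilon>0$, I would apply the $5r$-covering lemma to the family $\{B_Z(x,r_{x,i}/5)\,:\,x\in Y_l,\ r_{x,i}<5\varepsilon\}$ to extract disjoint balls $B_Z(x_m,\rho_m)$ whose fivefold enlargements cover $Y_l$; since each $5\rho_m$ is one of the admissible Hölder radii $r_{x_m,i}$, the Hölder condition yields a cover $f(Y_l)\subset\bigcup_m B_Y(f(x_m),k(5\rho_m)^\delta)$ by $Y$-balls of diameter at most $2k(5\varepsilon)^\delta$. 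Disjointness gives $\sum_m\rho_m^{s\delta}\le P^{s\delta,\varepsilon}_Z(Y_l)$, hence $\mathcal{H}^{s,2k(5\varepsilon)^\delta}_Y(f(Y_l))\le (2k)^s\,5^{s\delta}\,P^{s\delta,\varepsilon}_Z(Y_l)$; letting $\varepsilon\to 0$, summing in $l$, and finally sending $\eta\to 0$ shows $\mathcal{H}^s_Y(f(X_k))=0$, which proves \eqref{hpest}. I expect the principal technical obstacle to be precisely the passage from the pre-packing measure $P^s$ to the full packing measure $\mathcal{P}^s$ in part~(1): the outer-measure infimum in $\mathcal{P}^s$ ranges over arbitrary $Y$-decompositions, and one must verify that restricting attention to decompositions of the form $\{f(V_l\cap X_{j,k})\}$ suffices, which is where the careful pull-back through $f$ is needed.
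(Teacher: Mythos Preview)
Your proposal is correct and follows essentially the same route as the paper: decompose $X$ into countably many pieces with uniform H\"older data, push covers forward for \eqref{hhest}, pull packings back for \eqref{ppest} (then pass from $P^s$ to $\mathcal{P}^s$ exactly as you describe), and use the $5r$-covering lemma combined with a packing bound for \eqref{hpest}. The only cosmetic difference is that the paper applies the $5r$-lemma in $Y$ to the family $\{B_Y(f(x),nr^\delta)\}$ rather than in $Z$; also note that in a general metric space disjointness of $B_Y(y_i,r_i)$ only forces $d_Y(y_i,y_j)\ge\max(r_i,r_j)$, not $r_i+r_j$, but your estimate $d_Y(y_i,y_j)<(1/2)^\delta r_i<r_i$ already gives the needed contradiction.
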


\begin{proof}
The proof of \eqref{hhest} is standard.
We give some details for \eqref{ppest} and \eqref{hpest}.

To prove
\eqref{ppest}, we first observe that $X=\bigcup_{n\in\N}
X_{n}$ where
\[X_{n}=\{x\in X\,:\,f(B_Z(x,r))\subset B_Y(f(x), n r^\delta)\text{ for
  all }0<r<1/n\}.\]
Let $0<\varepsilon,s<\infty$, $A\subset X_n$ and suppose that
$B_Y(x_i,r_i)$, $i\in\N$ is a
packing of $f(A)$ so that $r_i<\min\{\varepsilon, n^{1-\delta}\}$ for
each $i$. If $y_i\in A\cap
f^{-1}\{x_i\}$ it follows that
$f(B_Z(y_i,n^{-1/\delta}r_{i}^{1/\delta}))\subset B_Y(x_i,r_i)$ (note that there can be more than one $y_i$ with $f(y_i)=x_i$, choosing any of them will do). Thus,
$B_Z(y_i, n^{-1/\delta}r_i^{1/\delta})$ is a packing of $A$. Letting
$\varepsilon\downarrow 0$, this
implies $P^{s}_{Y}(f(A))\leq n^s P^{s\delta}_{Z}(A)$ for all $A\subset
X_n$. As $A\subset X_n$ is arbitrary, we also get $\mathcal{P}^{s}_Y(f(X_n)\leq
n^s\mathcal{P}^{s\delta}_Z(X_n)$, in particular
$\Dim_Y(f(X_n))\leq\Dim_Z(X_n)/\delta$. The claim \eqref{ppest} now
follows
as $X=\cup_{n\in\N} X_n$.

In order to prove \eqref{hpest}, let
\[X_n=\{x\in X\,:\,f(B_Z(x,r_{x,i}))\subset B_Y(f(x),n
r_{x,i}^\delta)\text{ for some sequence }r_{x,i}\downarrow
0\}.\]
Then $X=\cup_{n\in\N} X_n$. Choose $A\subset X_n$ and fix
$s,\varepsilon>0$. Applying the standard $5R$-covering theorem
(see e.g. \cite[Theorem 2.1]{M}) to the
collection
\[\mathcal{B}=\{B_Y(f(x),n r^\delta)\,:\, x\in A, 0<r<\varepsilon,
f(B_Z(x,r))\subset B_Y(f(x),n r^\delta)\}\]
we find a pairwise
disjoint subcollection $\{B_Y(f(x_i),n r_{i}^\delta)\}_i$ of
$\mathcal{B}$ so that
$f(A)\subset\cup_i B_Y(f(x_i), 5n r_{i}^\delta)$. As $\{B_Z(x_i,
r_i)\}_i$ is a packing of $A$, we get
$\mathcal{H}^{s/\delta,5n \varepsilon^{\delta}}_Y(f(A))\leq (5n)^{s/\delta}
P^{s,\varepsilon}_Z(A)$ and letting $\varepsilon\downarrow 0$,
$\mathcal{H}^{s/\delta}_Y(f(A))\leq (5n)^{s/\delta}P^{s}_Z(A)$. As $A\subset
X_n$ is arbitrary, we also get $\mathcal{H}^{s/\delta}_Y(f(X_n))\leq
(5n)^{s/\delta}\mathcal{P}^{s}_Z(X_n)$ and finally
$\dim_Y(f(X))\leq\Dim_Z(X)/\delta$ since $X=\cup_{n\in\N} X_n$.
\end{proof}

Below, we give a variant of Lemma \ref{dimlemmaf} in terms of the
metrics $d$ and $d_\rho$.

\begin{lemma}\label{dimlemmarho}
Suppose that $\Omega\subset\Rn$ is a domain and
$\rho\colon\Omega\rightarrow (0,\infty)$ is a %continuous 
density. Let $A\subset
\partial_\rho\Omega$ and $0 \le \delta \le \infty$.
\begin{enumerate}
\item\label{rholemma1} If
   \[\liminf_{r\downarrow
    0}\frac{\log\left(\diam_\rho(B_d(x,r))\right)}{\log r}\geq\delta\]
  for all $x\in A$, then
  $\delta\dim_\rho(A)\leq\dim_d(A)$ and
  $\delta\Dim_\rho(A)\leq\Dim_d(A)$.
\item\label{rholemma2} If
  \[\liminf_{r\downarrow
    0}\frac{\log\left(\diam_d(B_\rho(x,r))\right)}{\log r}\geq\delta\]
  for all $x\in A$, then
  $\dim_\rho(A)\geq\delta \dim_d(A)$ and
  $\Dim_\rho(A)\geq\delta\Dim_d(A)$.
\item\label{rholemma3} If
   \[\limsup_{r\downarrow
    0}\frac{\log\left(\diam_\rho(B_d(x,r))\right)}{\log r}\geq\delta\]
  for all $x\in A$, then
  $\delta\dim_\rho(A)\leq \Dim_d(A)$.
\item\label{rholemma4} If  \[\limsup_{r\downarrow
    0}\frac{\log\left(\diam_d(B_\rho(x,r))\right)}{\log r}\geq\delta\]
  for all $x\in A$, then
  $\Dim_\rho(A)\geq\delta\dim_d(A)$.
\end{enumerate}
\end{lemma}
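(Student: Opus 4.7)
The plan is to reduce all four parts to Lemma \ref{dimlemmaf} by taking $f$ to be the set-theoretic identity between $(A,d)$ and $(A,d_\rho)$, in one direction or the other. Under assumptions \eqref{ass1} and \eqref{ass2}, points of $A \subset \partial_\rho\Omega$ sit naturally inside both $\closuredOmega$ and $\closurerhoOmega$, so both metrics are defined on $A$ and the identity map is a legitimate (set) map between the two metric spaces. Lemma \ref{dimlemmaf} requires only a ball-inclusion statement, not continuity, so no topological compatibility need be checked. The Hölder exponent $\delta$ in Lemma \ref{dimlemmaf} is supplied (up to an arbitrarily small loss $\varepsilon$) by the $\liminf$/$\limsup$ hypotheses.

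For part \eqref{rholemma1}, fix $\varepsilon>0$. The $\liminf$ hypothesis furnishes, for each $x\in A$, some $r_x>0$ such that $\diam_\rho(B_d(x,r))\le r^{\delta-\varepsilon}$ for all $0<r<r_x$. Since $x\in B_d(x,r)$, every $y\in B_d(x,r)\cap A$ satisfies $d_\rho(x,y)\le r^{\delta-\varepsilon}$, so the identity $f\colon (A,d)\to (A,d_\rho)$ sends $B_d(x,r)\cap A$ into $B_\rho(x,r^{\delta-\varepsilon})$. Applying Lemma \ref{dimlemmaf}\eqref{dimlemma1} with constants $C_x=1$ and exponent $\delta-\varepsilon$ yields
\[
(\delta-\varepsilon)\dim_\rho(A)\le\dim_d(A),\qquad (\delta-\varepsilon)\Dim_\rho(A)\le\Dim_d(A),
\]
and letting $\varepsilon\downarrow 0$ concludes part \eqref{rholemma1}. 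Part \eqref{rholemma2} is strictly analogous, with the roles of $d$ and $d_\rho$ exchanged: the hypothesis makes the identity $(A,d_\rho)\to(A,d)$ Hölder of exponent $\delta-\varepsilon$ near every point of $A$, and Lemma \ref{dimlemmaf}\eqref{dimlemma1} gives $(\delta-\varepsilon)\dim_d(A)\le\dim_\rho(A)$ together with the packing analogue.

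For parts \eqref{rholemma3} and \eqref{rholemma4}, the $\limsup$ hypothesis only supplies, for each $x\in A$, a sequence $r_{x,i}\downarrow 0$ along which the desired ball inclusion holds; this is precisely the setup of Lemma \ref{dimlemmaf}\eqref{dimlemma2}. Invoking it with $f$ the identity in the two possible directions gives $(\delta-\varepsilon)\dim_\rho(A)\le\Dim_d(A)$ for \eqref{rholemma3} and $(\delta-\varepsilon)\dim_d(A)\le\Dim_\rho(A)$ for \eqref{rholemma4}; a final $\varepsilon\downarrow 0$ completes the argument.

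I do not anticipate a substantive obstacle: the content is entirely a careful repackaging of Lemma \ref{dimlemmaf}. The only minor items to attend to are the boundary cases $\delta=0$ (trivial, since every inequality becomes $0\le\dim$) and $\delta=\infty$ (in which case the hypothesis forces the relevant $\rho$-dimension to vanish, consistent with the stated conclusions), together with the observation that in the ambient space $\closurerhoOmega$ the quantities $\diam_d$ and $\diam_\rho$ are both finite on subsets of $A$ thanks to \eqref{ass1}–\eqref{ass2}.
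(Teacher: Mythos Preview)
Your proposal is correct and follows exactly the paper's approach: apply Lemma \ref{dimlemmaf} to the identity map between $(\closurerhoOmega,d)$ and $(\closurerhoOmega,d_\rho)$ (or its inverse), using the $\liminf$/$\limsup$ hypotheses to supply the needed ball inclusions with exponent $\lambda<\delta$, then let $\lambda\uparrow\delta$. The paper's proof is essentially a two-sentence sketch of the same reduction.
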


\begin{proof}
All the claims \eqref{rholemma1}--\eqref{rholemma4} follow easily from Lemma \ref{dimlemmaf} applied to the mapping
$f\colon( \closurerhoOmega ,d)\rightarrow ( \closurerhoOmega ,d_\rho)$, $x\mapsto x$ and its inverse.
To prove
\eqref{rholemma1}, for instance, fix $\lambda<\delta$. Then for all
$x\in A$, there is
$r_x>0$ so that
$B_d(x,r)\subset B_\rho(x,r^\lambda)$ when
$0<r<r_x$. Thus, Lemma \ref{dimlemmaf} \eqref{dimlemma1} implies
$\lambda\dim_\rho(A)\leq \dim_d(A)$ and $\lambda\Dim_\rho(A)\leq\Dim_d(A)$. Letting
$\lambda\uparrow\delta$, yields \eqref{rholemma1}.
\end{proof}

We end the preliminaries with the following lemma.

\begin{lemma}\label{lemma:john}
For all $0<\alpha\le 1$ and $n\in\N$, there exists constants
$N=N(\alpha,n)\in\N$ and $c=c(\alpha)<\infty$ so that for all $\alpha$-John domains
$\Omega\subset\Rn$ the following holds: For all
$x\in\closuredOmega$ and
$r>0$, there are points $x_1,\ldots, x_N\in\closuredOmega$ so
that
$B_d(x,r)\subset\cup_{i=1}^{N}B_{\alpha/2}(x_i,cr)$.
\end{lemma}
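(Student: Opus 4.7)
My plan is to build the net $\{x_1,\ldots,x_N\}$ as the union of the John center $x_0$ and a coarse Euclidean net of deep interior points, and to route every $y\in B_d(x,r)$ to an appropriate net point through an intermediate ``hub'' lying at Euclidean distance $\ge\alpha r$ from $\partial\Omega$. The raw material is the $\alpha$-cigar from $y$ to $x_0$ guaranteed by the John property (noted in Section~\ref{sec:notation} for all $y\in\closuredOmega$); the task will be to splice a short piece of this cigar with a short Euclidean segment inside $\Omega$ so as to obtain an $\alpha/2$-cigar of length $\le 2r$ from one of the base points to $y$.

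First I would fix $x\in\closuredOmega$, $r>0$, and set
\[
  E=\{z\in\Omega\,:\,d(z)\ge\alpha r\}\cap B_e(x,2r),
\]
then take $\{z_1,\ldots,z_M\}\subset E$ maximal with pairwise Euclidean distances $\ge\alpha r/4$. A volume-packing estimate (disjoint Euclidean balls $B_e(z_j,\alpha r/8)$ inside $B_e(x,3r)$) gives $M\le(24/\alpha)^n$, so declaring $x_1=x_0$ and $x_{j+1}=z_j$ produces a net of cardinality $N\le(24/\alpha)^n+1$, depending only on $\alpha$ and $n$. I expect the constant $c=2$ to suffice.

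For $y\in B_d(x,r)\cap\closuredOmega$, let $\gamma\colon[0,\ell]\to\closuredOmega$ be an $\alpha$-cigar from $y$ to $x_0$. If $\ell<2r$ then $\gamma$ is itself an $\alpha/2$-cigar of length $<2r$, so $y\in B_{\alpha/2}(x_0,2r)$. Otherwise the hub $z_y:=\gamma(r)$ satisfies $d(z_y)\ge\alpha\min(r,\ell-r)=\alpha r$ and $|x-z_y|\le d(x,y)+d(y,z_y)<2r$, so $z_y\in E$; maximality yields some $z_j$ with $|z_j-z_y|<\alpha r/4$, and since $B_e(z_y,\alpha r/4)\subset\Omega$ the Euclidean segment from $z_j$ to $z_y$ lies in $\Omega$ and has $d(w)\ge d(z_y)-\alpha r/4\ge 3\alpha r/4$ along it. Concatenating this segment with the reverse of $\gamma|_{[0,r]}$ produces a curve $\widetilde\gamma$ of total length $\le\alpha r/4+r\le 2r$ from $z_j$ to $y$.

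The main obstacle is to verify that $\widetilde\gamma$ is an $\alpha/2$-cigar. This is routine bookkeeping once the pieces are set up: on the segment piece one has $\min(s,\ell'-s)=s\le\alpha r/4$ while $d(w)\ge 3\alpha r/4\ge(\alpha/2)s$; on the cigar piece, the arclength from $y$ at $\gamma(t)$ equals exactly $t$, and the condition $\ell-t\ge r\ge t$ forces $\min(s,\ell'-s)\le t$ and $d(\gamma(t))\ge\alpha t\ge(\alpha/2)\min$. The only subtlety worth flagging---and the reason $x_0$ must be included as a base point even when $d(x_0)$ happens to be smaller than $\alpha r$---is the short-cigar case $\ell<2r$, in which no element of $E$ need lie near $y$ and one must fall back on the John center directly.
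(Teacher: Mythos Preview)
Your proof is correct and rests on the same core idea as the paper's: run the John curve from $y$ up to a hub at height $\approx r$, then bound the number of essentially distinct hubs by a Euclidean volume-packing argument. The implementations differ. The paper keeps its base points $x_i$ inside $B_d(x,r)$, attaches to each the carrot neighborhood $A_{x_i}=\{z:d(z,\gamma_{x_i}(t))<\tfrac{\alpha}{3}t\text{ for some }t\}$, chooses the $x_i$ maximal with pairwise disjoint $A_{x_i}\cap B_d(x,2r)$, and whenever $A_y$ meets $A_{x_i}$ splices the two cones through a common point $w$; this yields $c=4/\alpha+1$ and $N=(6/\alpha)^n$. You instead place the base points at the hubs themselves (plus $x_0$ for the short-cone case) and splice a single cone piece with a short Euclidean segment to the nearest net hub, which is more direct, avoids the carrot-neighborhood bookkeeping, and gives the sharper constant $c=2$. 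The paper's arrangement has the minor cosmetic advantage that its base points actually sit in $B_d(x,r)$, but your route is the cleaner of the two.
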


\begin{proof}
For all $y\in B_d(x,r)$, let $\gamma_y$ be an $\alpha$-cone that joins
$y$ to $x_0$, where $x_0\in\Omega$ is a fixed John centre of
$\Omega$. Moreover, we let
\[A_y=\{z\in\Omega\,:\, d(z,\gamma_y(t))<\tfrac{\alpha}{3}t\text{ for
  some }0<t<\length(\gamma_y)\}.\]
We may assume that $d(x,x_0)\geq 2r$ since otherwise $B_d(x,r)\subset
B_\alpha(x_0,2r)$.

We first claim that if $y,z\in B_d(x,r)$ such that $B_d(x,2r)\cap A_y\cap
A_z\neq\emptyset$, then
$y$ and $z$ may be joined by an $(\alpha/2)$-cigar
$\gamma$ with $\length(\gamma)\leq c(\alpha)r$. For this, we may
assume that $d(x)<2r$ as otherwise the Euclidean line segment joining
$y$ to $z$ suites as $\gamma$.
Assume that $w\in B_d(x,2r)\cap A_y\cap A_z$ and choose
$t_y,t_z>0$ so that $d(w,\gamma_y(t_y))<\tfrac{\alpha}{3}t_y$ and
$d(w,\gamma_z(t_z))<\tfrac{\alpha}{3}t_z$. Let $\gamma$ denote
the curve which consists of $\gamma_y|_{0<t\leq t_y}$,
$\gamma_z|_{0<t<t_z}$ and the two (Euclidean) line segments joining $w$ to
$\gamma_y(t_y)$ and $\gamma_z(t_z)$. As
$(t_y+\tfrac{\alpha}{3}t_y)\tfrac\alpha2\leq \alpha t_y-\tfrac\alpha3
t_y$ (and similarly for $t_z$), it follows that $\gamma$ is an
$\tfrac\alpha2$-cigar. Now $B_e(w,\tfrac23\alpha
t_y)\subset\Omega$, $B_e(w,\tfrac23\alpha
t_z)\subset\Omega$ by the $\alpha$-cone condition. Combining this with
the fact $d(w)\le |w-x|+d(x)\le 4r$ implies $t_y,t_z\leq\tfrac6\alpha r$ and
consequently 
\[\length(\gamma)\leq \left(1+\frac\alpha3\right)\left(t_y+t_z\right)\leq
\left(\frac4\alpha+1\right)r=c(\alpha)r.\]

Let $x_1,\ldots, x_N\in B_d(x,r)$ be such that $B_d(x,2r)\cap A_{x_j}\cap
A_{x_i}=\emptyset$ whenever $i\neq j$. It suffices to show that $N\leq
N(n,\alpha)$. For each $i$, let $y_i=\gamma_{x_i}(r)$.
Then $B_d(y_i,\alpha
r/3)=B_e(y_i,\alpha r/3)\subset A_{x_i}\cap B_d(x,2r)$ and a volume
comparison yields
$N (r \alpha/3)^n\leq 2^n r^n$ implying the claim for $N(n,\alpha) = (6/\alpha)^n$.
\end{proof}

\begin{rem}
A subset of the boundary of a
John domain has the same Hausdorff dimension both in the internal and
the 
Euclidean metric. Indeed, it follows as in the above proof that for
any $x$ which is an Euclidean boundary point of $\Omega$, the
set $B_e(x,r)\cap\Omega$ may be covered by $N=N(\alpha,r)$
balls of radius $c(\alpha)r$ in the internal metric. A slightly more
detailed argument 
implies a similar statement for the packing
dimension.
\end{rem}

\section{Dimension estimates on general domains}\label{generalestimates}

We first derive some straightforward dimension bounds
arising from the local power law behaviour of the density $\rho$
near $\partial\Omega$. For the definition of $i^{-}(x)$ and $i^{+}(x)$
recall \eqref{limiti}. The relevant assumptions are slightly different
for the upper and lower bounds, and also depend on the sign of
$i^\pm$. Roughly speaking, the positive values of $i^\pm$ correspond to
expansion behaviour (of $d_\rho$ compared to $d$), whereas the
negative values are related to compression of dimensions. If we aim to find
the exact values of $\dim_\rho(\partial_\rho\Omega)$ and
$\Dim_\rho(\partial_\rho\Omega)$, then we are usually more interested
in the set where $i^\pm$ are negative.

\begin{lemma}\label{thm:estimates1}
Suppose that $\Omega\subset\Rn$, $\rho$ is a density
on $\Omega$,
$\beta>-1$,
$A\subset\{x\in\partial_\rho\Omega\,:\,i^{+}(x)\leq\beta\}$ and
$B\subset\{x\in\partial_\rho\Omega\,:\,i^{-}(x)\geq\beta\}$.

If $\beta<0$ or if Assumption \ref{assu:gamma} holds, then
\begin{enumerate}
\item\label{1}
  $(1+\beta)\dim_\rho(A)\geq
  \dim_d(A)$,
\item\label{3}
  $(1+\beta)\Dim_\rho(A)\geq
  \Dim_d(A)$.
\newcounter{enumi_saved}
\setcounter{enumi_saved}{\value{enumi}}
\end{enumerate}

If $\Omega$ is a John domain, or if $\beta>0$, we have
\begin{enumerate}
\setcounter{enumi}{\value{enumi_saved}}
\item\label{2}
  $(1+\beta)\dim_\rho(B)\leq
  \dim_d(B)$,
\item\label{4}
  $(1+\beta)\Dim_\rho(B)
  \leq\Dim_d(B)$.
\end{enumerate}
\end{lemma}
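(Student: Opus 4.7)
All four claims will follow from Lemma~\ref{dimlemmarho} once I establish pointwise two-sided comparisons between $d$ and $d_\rho$ near each point of $A$ (resp.\ $B$). The plan is to bootstrap $i^+(x)\le\beta$ (resp.\ $i^-(x)\ge\beta$) into $d_\rho(x,y)\gtrsim d(x,y)^{1+\beta+\varepsilon}$ (resp.\ $d_\rho(x,y)\lesssim d(x,y)^{1+\beta-\varepsilon}$) for $y$ close to $x$ and an arbitrary $\varepsilon>0$, and send $\varepsilon\downarrow 0$ at the end.

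For (1) and (3), fix $x\in A$ and small $\varepsilon>0$; from $i^+(x)\le\beta$ I obtain $r_1\in(0,1)$ with $\rho(z)\ge d(z)^{\beta+\varepsilon}$ on $\Omega\cap B_d(x,r_1)$. If $\beta<0$, I choose $\varepsilon$ so that $\beta+\varepsilon<0$; any curve $\gamma\colon[0,L]\to\Omega$ starting at $x$ satisfies $d(\gamma(s))\le s$ (since $d(x)=0$), hence on its initial sub-arc in $B_d(x,r_1)$
\[
\ell_\rho(\gamma)\ge\int_0^{\min(L,r_1)} d(\gamma(s))^{\beta+\varepsilon}\,ds\ge\int_0^{\min(L,r_1)}s^{\beta+\varepsilon}\,ds\gtrsim d(x,y)^{1+\beta+\varepsilon}
\]
once $d(x,y)<r_1$. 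If instead Assumption~\ref{assu:gamma} is in force, I invoke it with parameter $\varepsilon$ to obtain a curve $\gamma$ with $h(\gamma)\ge d(x,y)^{1+\varepsilon}$ and $\ell_\rho(\gamma)\le d_\rho(x,y)^{1-\varepsilon}$, and apply the Eilenberg--coarea inequality to the $1$-Lipschitz function $\phi(s)=d(\gamma(s))$ on the sub-arc from $x$ to the first point of maximal height:
\[
\ell_\rho(\gamma)\ge\int\phi^{\beta+\varepsilon}|\phi'|\,ds\ge\int_0^{h(\gamma)}t^{\beta+\varepsilon}\,dt\gtrsim d(x,y)^{(1+\varepsilon)(1+\beta+\varepsilon)}.
\]
Comparing with $\ell_\rho(\gamma)\le d_\rho(x,y)^{1-\varepsilon}$ gives $d_\rho(x,y)\gtrsim d(x,y)^{(1+\varepsilon)(1+\beta+\varepsilon)/(1-\varepsilon)}$, and the exponent tends to $1+\beta$.

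For (2) and (4), fix $x\in B$ and small $\varepsilon>0$; from $i^-(x)\ge\beta$ I get $r_1$ with $\rho(z)\le d(z)^{\beta-\varepsilon}$ on $\Omega\cap B_d(x,r_1)$. It now suffices to exhibit a single curve $\gamma$ from $x$ to $y$ with $\ell_\rho(\gamma)\lesssim d(x,y)^{1+\beta-\varepsilon}$. If $\beta>0$, I take $\varepsilon<\beta$ and $\gamma$ an almost-$d$-minimizer of length $L\le 2d(x,y)<r_1$; since $d(\gamma(s))\le s$ and $\beta-\varepsilon>0$,
\[
\ell_\rho(\gamma)\le\int_0^L d(\gamma(s))^{\beta-\varepsilon}\,ds\le\int_0^L s^{\beta-\varepsilon}\,ds\lesssim L^{1+\beta-\varepsilon}\lesssim d(x,y)^{1+\beta-\varepsilon}.
\]
If $\Omega$ is John, Lemma~\ref{lemma:john} together with a concatenation of John cones produces an $\alpha'$-cigar $\gamma$ from $x$ to $y$ of length $L\lesssim d(x,y)$ with $d(\gamma(s))\gtrsim\min(s,L-s)$, and for $\beta-\varepsilon>-1$ the analogous integration $\int_0^L\min(s,L-s)^{\beta-\varepsilon}\,ds\lesssim L^{1+\beta-\varepsilon}$ gives the bound (the cigar is what keeps the integrand integrable when $\beta-\varepsilon<0$).

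The hardest step will be the coarea estimate in (1),(3) under Assumption~\ref{assu:gamma}: the sub-arc realizing $h(\gamma)$ must lie in $B_d(x,r_1)$ for the pointwise bound $\rho\ge d^{\beta+\varepsilon}$ to apply, yet the Euclidean length of $\gamma$ is only indirectly controlled through $\ell_\rho(\gamma)$, and $\rho$ may be very small near $\partial\Omega$. The plan is to truncate $\gamma$ at its first exit from $B_d(x,r_1)$ and show that for $y$ close enough to $x$ (so that $d(x,y)^{1+\varepsilon}\ll r_1$) the height already attained inside $B_d(x,r_1)$ is at least $d(x,y)^{1+\varepsilon}$; in the remaining sub-case the truncated sub-arc has length $\ge r_1$, and a direct quantitative comparison with $\ell_\rho(\gamma)\le d_\rho(x,y)^{1-\varepsilon}$ forces $d_\rho(x,y)$ to be bounded below by a fixed positive constant, which dominates $d(x,y)^{1+\beta+\varepsilon}$ as soon as $d(x,y)<r_1$. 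The remainder is routine bookkeeping with exponents and passing to the limit $\varepsilon\downarrow 0$.
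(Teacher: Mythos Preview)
Your overall strategy matches the paper's: reduce to Lemma \ref{dimlemmarho} via pointwise comparisons $d_\rho(x,y)\gtrsim d(x,y)^{1+\beta+\varepsilon}$ on $A$ and $d_\rho(x,y)\lesssim d(x,y)^{1+\beta-\varepsilon}$ on $B$, then let $\varepsilon\downarrow0$. The $\beta<0$ case for (1)--(2) and the $\beta>0$ case for (3)--(4) are essentially identical to the paper's. Two points deserve comment.

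\emph{The John case for (3)--(4).} You assert that ``Lemma~\ref{lemma:john} together with a concatenation of John cones produces an $\alpha'$-cigar $\gamma$ from $x$ to $y$ of length $L\lesssim d(x,y)$''. Lemma~\ref{lemma:john} does not give this directly: it only covers $B_d(x,r)$ by $N$ balls $B_{\alpha/2}(x_i,cr)$. The paper does not build a single cigar; instead it bounds $\diam_\rho(B_{\alpha/2}(x_i,cr))\lesssim r^{1+s}$ for each $i$ by integrating along cigars, and then uses connectedness of $B_d(x,r)$ to conclude $\diam_\rho(B_d(x,r))\le N c_2 r^{1+s}$. Your integral estimate $\int_0^L\min(s,L-s)^{\beta-\varepsilon}ds\lesssim L^{1+\beta-\varepsilon}$ is exactly what is applied to each of these $N$ cigars, so the fix is immediate once you replace the single cigar by the chain.

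\emph{The localization under Assumption~\ref{assu:gamma} when $\beta\ge0$.} You correctly isolate the real difficulty: the curve $\gamma$ supplied by Assumption~\ref{assu:gamma} need not stay in $B_d(x,r_1)$, so the bound $\rho\ge d^{\beta+\varepsilon}$ is not available on all of $\gamma$. But your proposed resolution has a gap. In your ``remaining sub-case'' the truncated sub-arc has arc-length $\ge r_1$ and height $<d(x,y)^{1+\varepsilon}$; you then claim that ``a direct quantitative comparison'' forces $d_\rho(x,y)$ to be bounded below by a fixed constant. This would require a uniform lower bound on $\ell_\rho$ of the truncated sub-arc, yet on that sub-arc you only know $\rho(z)\ge d(z)^{\beta+\varepsilon}$ with $\beta+\varepsilon\ge0$ and $d(z)<d(x,y)^{1+\varepsilon}$ arbitrarily small, so no such lower bound follows. (Your fix \emph{does} work when $\beta+\varepsilon<0$, since then $\int_0^{r_1}s^{\beta+\varepsilon}ds$ is a fixed positive number; but that case is already covered by your first argument.) The paper's own treatment of this step is equally brief: it writes $\ell_\rho(\gamma)\ge h(\gamma)(h(\gamma)/2)^s$ without explaining why $\rho>d^s$ holds on the relevant portion of $\gamma$, so the issue you raise is genuine and not fully resolved in either version.
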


\begin{proof}
Assume first that $\beta<0$ and choose $\beta<s<0$. 
Now, for all $x\in A$, there is $q>0$ so that
$\rho(y)>d(y)^s$ for all $y\in B_d(x,q)$.
Let $r<(q/2)^{1+s}$ 
and choose $y\in
B_\rho(x,r)$ such
that $d(x,y)>\diam_d(B_\rho(x,r))/3$. Also, let $\gamma$ be a curve
joining $x$ to
$y$ such that $\length_\rho(\gamma)<r$.
Then $\gamma \subset B_d(x,q)$ as otherwise there is a curve $\gamma' \subset \gamma \cap \overline{B}_d(x,q)$ connecting $x$ to $\partial B_d(x,q)$, and then
\begin{equation*}
\length_\rho(\gamma) \ge \length_\rho(\gamma')  = \int_{\gamma'}\rho(z)|dz| \ge \int_{\gamma'} d(\gamma(t))^s dt \ge \ell(\gamma')^{1+s} \ge q^{1+s},
\end{equation*}
which is impossible.
Now
$\rho(z)>d(z)^s$ for all $z\in\gamma$ and combining this with the fact
$\length(\gamma)\geq d(x,y)$, we obtain
\begin{equation*}
r >
\length_\rho(\gamma)=\int_{\gamma}\rho(z)|dz|>\left(\frac{d(x,y)}{2}\right)^{1+s}.
\end{equation*} 
This yields
$\diam_d(B_\rho(x,r))<3 d(x,y)<6 r^{1/(1+s)}$.
As this holds for all $0<r<(q/2)^{1/(1+s)}$, we get
\begin{equation}\label{tamakinviela}
\liminf_{r\downarrow
  0}\frac{\log\diam_d(B_\rho(x,r))}{\log r}\geq
\frac1{1+s}
\end{equation}
for all $x\in A$. 

Assume now that  $s>\beta\ge0$ and that Assumption \ref{assu:gamma}
holds. Let $x\in A$, $\varepsilon>0$, $y\in \closurerhoOmega$. If
$d_\rho(x,y)$ is small, then Assumption \ref{assu:gamma} gives a curve
$\gamma$ joining $x$ to $y$ with $h(\gamma)\ge d(x,y)^{1+\varepsilon}$
and $\ell_\rho(\gamma)\le d_\rho(x,y)^{1-\varepsilon}$. Thus, for $r>0$ small
enough, and all $y\in B_\rho(x,r)$, we have
\begin{align*}
d_\rho(x,y)^{1-\varepsilon}\ge\length_\rho(\gamma) =
\int_{\gamma}\rho(z)|dz|\ge h(\gamma)(h(\gamma)/2)^{s}\ge 2^{-s}d(x,y)^{(1+s)(1+\varepsilon)}
\end{align*}
for some curve joining $x$ and $y$.
This shows that under Assumption \ref{assu:gamma},
\eqref{tamakinviela} holds true also if $\beta\ge 0$. 
The claims \eqref{1} and \eqref{3} now follow using Lemma
\ref{dimlemmarho} \eqref{rholemma2} and letting $s\downarrow\beta$.

In order to prove the claims \eqref{2} and \eqref{4}, in view of Lemma \ref{dimlemmarho}
\eqref{rholemma1}, it suffices to show that
\begin{equation}\label{diamest}
\liminf_{r\downarrow
    0}\frac{\log\diam_\rho(B_d(x,r))}{\log r}\geq 1+\beta
\end{equation}
for all $x\in B$. Let $x\in B$ and  $s<\beta$. Then there is $q>0$ so that
$\rho(y)<d(y)^s$ for all $y\in B_d(x,q)$. Let $r<q/(2c+1)$, where
$c=c(\alpha)<\infty$ is the constant of Lemma \ref{lemma:john} and
where $\alpha>0$ is such that $\Omega$ is an $\alpha$-John domain. By Lemma
\ref{lemma:john}, we find $x_1,\ldots, x_N\in  B_d(x,(c+1)r)$,
$N=N(n,\alpha)$, such that
$B_d(x,r)\subset\bigcup_{i=1}^{N}B_{\alpha/2}(x_i,cr)$.

Let $x_i\in\{x_1,\ldots,x_N\}$ and $y\in
B_{\alpha/2}(x_i,cr)$. Then there is
an $(\alpha/2)$-cigar
$\gamma$ joining $x_i$ to $y$ with $\length(\gamma)<c r$. Assume that
$s\le 0$. Since
$r<q/(2c+1)$, we have
$\rho(\gamma(t))<d(\gamma(t))^s
\leq\alpha^s\min\{t,\length(\gamma)-t\}^s$
for all $0<t<\length(\gamma)$ and thus
\[d_\rho(x_i,y)\leq \int_{\gamma}\rho(z)|dz|\leq
2\alpha^s\int_{t=0}^{\length(\gamma)/2}t^s\,
dt=c_1\length(\gamma)^{1+s}<c^{1+s}c_1
r^{1+s}\]
giving $\diam_\rho(B_{\alpha/2}(x_i,r))\leq 2\cdot c^{1+s}
c_1 r^{1+s}=c_2
r^{1+s}$, where $c_2<\infty$ depends only on
$\alpha$, $n$, and $s$. 
As
$B_d(x,r)$
is connected, we arrive at
\begin{equation}\label{poo}
\diam_\rho(B_d(x,r))\leq\sum_{i=1}^{N}\diam_\rho(B_{\alpha/2}(x_i,cr))\leq
N c_2 r^{1+s}.
\end{equation}

If $s\ge0$, we arrive at the same estimate by using the trivial
estimate $\rho(z)\le\ell(\gamma)^s$ for all $z\in\gamma$. 
Since \eqref{poo} holds for all sufficiently small $r>0$ and $s<\beta$
is arbitrary, we get \eqref{diamest}.
\end{proof}

Next we will use the Lemma \ref{thm:estimates1} to obtain
multifractal type formulas for estimating the dimension of
$\partial_\rho\Omega$. 
To recall the definitions of $d^\pm(\beta)$ and $D^\pm(\beta)$, see \eqref{limitd1}-\eqref{limitd4}.

\begin{thm}\label{estimates2}
Let $\Omega\subset\R^n$ be a John domain, and $\rho$ a density on
$\Omega$ so that Assumption \ref{assu:gamma} is satisfied. Then
\begin{align}
\dim_\rho(\partial_\rho\Omega)
\geq\sup\limits_{\beta>-1}\frac{d^+(\beta)}{1+\beta},\label{est21}\\
\Dim_\rho(\partial_\rho\Omega)\geq\sup\limits_{\beta>-1}\frac{D^+(\beta)}{1+\beta},\label{est22}\\
\dim_\rho\left(\partial_\rho\Omega\cap\{x\,:\,i^{-}(x)>-1\}\right)&\leq
\sup\limits_{\beta>-1}\frac{d^-(\beta)}{1+\beta},\label{est23}\\
\Dim_\rho\left(\partial_\rho\Omega\cap\{x\,:\,i^{-}(x)>-1\}\right)&\leq
\sup\limits_{\beta>-1}\frac{D^-(\beta)}{1+\beta}.\label{est24}
\end{align}
\end{thm}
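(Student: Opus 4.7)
The plan is to reduce Theorem \ref{estimates2} to Lemma \ref{thm:estimates1} by applying it to suitable level sets of $i^{\pm}$, combined with a countable decomposition for the upper bounds. The lower bounds \eqref{est21}, \eqref{est22} come out essentially for free; the upper bounds \eqref{est23}, \eqref{est24} are subtler because $d^{-}$ and $D^{-}$ are defined using the sublevel sets $\{i^{-}\le\beta\}$ while Lemma \ref{thm:estimates1} \eqref{2}, \eqref{4} yields upper bounds on the superlevel sets $\{i^{-}\ge\beta\}$. Reconciling this mismatch is the main technical point.

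For \eqref{est21} and \eqref{est22}: fix $\beta>-1$ and set $A_\beta=\{x\in\partial_\rho\Omega\,:\,i^+(x)\le\beta\}$. Since Assumption \ref{assu:gamma} is in force, Lemma \ref{thm:estimates1} \eqref{1}, \eqref{3} apply to $A_\beta$ and give
\[
\dim_\rho(\partial_\rho\Omega)\ge\dim_\rho(A_\beta)\ge\frac{\dim_d(A_\beta)}{1+\beta}=\frac{d^+(\beta)}{1+\beta},
\]
together with the analogous packing estimate. Taking the supremum over $\beta>-1$ yields \eqref{est21} and \eqref{est22}.

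For \eqref{est23}, \eqref{est24}, fix $\varepsilon>0$ and set $\beta_n=-1+(1+\varepsilon)^n$ for $n\in\Z$, so that $\beta_n\downarrow-1$ as $n\to-\infty$ and $\beta_n\to\infty$ as $n\to\infty$. Define
\[
B_n=\{x\in\partial_\rho\Omega\,:\,\beta_{n-1}<i^-(x)\le\beta_n\},\qquad B_\infty=\{x\in\partial_\rho\Omega\,:\,i^-(x)=\infty\}.
\]
Then $\partial_\rho\Omega\cap\{i^->-1\}$ is the disjoint union of the $B_n$'s and $B_\infty$. Since $\Omega$ is a John domain and $i^-\ge\beta_{n-1}$ on $B_n$, Lemma \ref{thm:estimates1} \eqref{2} gives
\[
\dim_\rho(B_n)\le\frac{\dim_d(B_n)}{1+\beta_{n-1}}\le\frac{d^-(\beta_n)}{1+\beta_{n-1}}=(1+\varepsilon)\frac{d^-(\beta_n)}{1+\beta_n}\le(1+\varepsilon)\sup_{\beta>-1}\frac{d^-(\beta)}{1+\beta},
\]
using the inclusion $B_n\subset\{i^-\le\beta_n\}$ and the definition of $d^-$. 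The analogous bound for $\Dim_\rho(B_n)$ follows from Lemma \ref{thm:estimates1} \eqref{4}. On $B_\infty$ we have $i^-\ge\beta$ for every $\beta>-1$, so the same lemma combined with $\dim_d(B_\infty)\le n$ yields $\dim_\rho(B_\infty)=\Dim_\rho(B_\infty)=0$ after letting $\beta\to\infty$.

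Finally, countable stability of $\dim_\rho$ and $\Dim_\rho$ gives
\[
\dim_\rho\bigl(\partial_\rho\Omega\cap\{i^->-1\}\bigr)=\sup_{n\in\Z}\dim_\rho(B_n)\le(1+\varepsilon)\sup_{\beta>-1}\frac{d^-(\beta)}{1+\beta},
\]
and the same for $\Dim_\rho$; sending $\varepsilon\downarrow 0$ proves \eqref{est23}, \eqref{est24}. The key design choice is the geometric spacing $\beta_n=-1+(1+\varepsilon)^n$: it makes the ratio $(1+\beta_n)/(1+\beta_{n-1})$ equal to $1+\varepsilon$ \emph{uniformly in $n$}, so a single $\varepsilon$ controls every slice simultaneously and no delicate issues arise at the endpoints $\beta\to-1$ or $\beta\to\infty$. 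This uniform control is precisely the obstacle one has to overcome, and it is what converts the Lemma's superlevel-set estimates into the supremum over $\beta$ that appears in the statement.
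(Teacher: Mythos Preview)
Your proof is correct and follows essentially the same strategy as the paper: apply Lemma \ref{thm:estimates1} directly for the lower bounds, and for the upper bounds decompose $\{i^{-}>-1\}$ into slices on which $i^{-}$ lies in an interval $[a,b]$ with $(1+b)/(1+a)\le 1+\varepsilon$, then invoke countable stability. Your explicit geometric spacing $\beta_n=-1+(1+\varepsilon)^n$ is precisely the covering the paper alludes to, and your separate treatment of $B_\infty=\{i^{-}=\infty\}$ cleanly fills a small gap the paper leaves implicit.
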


\begin{proof}
Let us prove \eqref{est21} and \eqref{est23}. The other estimates are
obtained similarly
with the help of the corresponding statements of Lemma \ref{thm:estimates1}.
Let 
\[s< \sup\limits_{\beta>-1} \frac{d^+(\beta)}{1+\beta}\] 
and pick $\beta>-1$ such that
$\dim_d \{x\in\partial_\rho\Omega\,:\,i^+(x)\le \beta\}>s(1+\beta)$.
Combining this with  Lemma
\ref{thm:estimates1} \eqref{1} gives
\begin{align*}
\dim_\rho\left(\partial_\rho\Omega\right) \geq\dim_\rho\{x\in
  \partial_\rho\Omega\,:\,i^{+}(x)\le \beta\}
\geq \frac{\dim_d \{x\in
  \partial_\rho\Omega\,:\,i^{+}(x)\le \beta\}}{1+\beta}
>s
\end{align*}
proving \eqref{est21}.%

To prove \eqref{est23}, we observe that given an interval
$[a,b]\subset (-1,\infty)$, Lemma \ref{thm:estimates1} \eqref{2} gives
\begin{align*}
\dim_\rho\{x\in\partial_\rho\Omega\,:\,i^{-}(x)\in[a,b]\}
&\leq\dim_d \{x\in\intbndryOmega\,:\,i^{-}(x)\in[a,b]\}/(1+a)\\
&\leq
\frac{1+b}{1+a}\sup_{\beta>-1}\frac{d^-(\beta)}{1+\beta}. 
\end{align*}
For any
$\varepsilon>0$ we may cover the interval $(-1,\infty)$ with intervals
$[a_i,b_i]_{i\in\N}$ so that $1+b_i<(1+\varepsilon)(1+a_i)$ for all
$i$. Consequently,
\begin{align*}
\dim_\rho(\partial_\rho\Omega\cap\{x\,:\,i^{-}(x)>-1\})&\leq
\sup_{i\in\N}\dim_\rho(\partial_\rho\Omega\cap\{x\,:\,i^{-}(x)\in[a_i,b_i]\})
\\&<
(1+\varepsilon)\sup_{\beta>-1}\frac{d^{-}(\beta)}{1+\beta}.
\end{align*} 
Now \eqref{est23}
follows as $\varepsilon\downarrow 0$.
\end{proof}

\begin{rems}
\textit{a)}
Suppose that $\Omega$ is a John domain, $\rho$ satisfies Assumption
\ref{assu:gamma} and $i^-(x)>-1$ for
all $x\in\partial_\rho\Omega$.
Then Theorem \ref{estimates2} gives a formula for calculating
$\dim_\rho(\partial_\rho\Omega)$ provided that
$\sup_{\beta>-1} d^+(\beta)/(1+\beta)$ and $\sup_{\beta>-1} d^-(\beta)/(1+\beta)$
coincide. In particular, this is the case if $-1<i^-(x)=i^+(x)$  for
all $x\in\partial_\rho\Omega$.
A similar statement is, of course, true for the packing
dimension.
See also the examples below.\\
\textit{b)} In general it is not possible to control
$\dim_\rho\{x\in\partial_\rho\Omega\,:\,i^{-}(x)\leq-1\}$ in terms
of $\dim_d \{x\in\partial_\rho\Omega\,:\,i^{-}(x)\leq-1\}$.
Let $\Omega=\mathbb{B}^n$ and choose a continuous
$f\colon(0,\infty)\to(0,\infty)$ such that $\int_{t=0}^1
f(t)\,dt<\infty$ and $\log f(t)/\log t\to-1$ as $t\downarrow
0$. Then it is possible to construct a Cantor set $C\subset S^{n-1}$
such that $\dim_d (C)=0$ and $\dim_\rho(C)=\infty$ for
$\rho(x)=f(\dist(x,C))$. See also
\cite[Proposition 7.1]{BKR}, where a similar type of example is considered.
\end{rems}

In the following example, all four of the inequalities
\eqref{est21}--\eqref{est24} hold with equalities.

\begin{ex}\label{ex:Cantor}
Let $\Omega=B_e(0,1)\subset\Rn$ and let $C\subset S^{n-1}$ be a Cantor
set with $\dim_d C=s$ and $\Dim_d C=t$. Let $\beta>-1$ and
$\rho(x)=d(x,C)^\beta$. Then $\dim_\rho(C)=s/(1+\beta)$,
$\Dim_\rho(C)=t/(1+\beta)$, and $\dim_\rho(\intbndryOmega\setminus
C)=\Dim_\rho(\intbndryOmega\setminus C) = n-1$. Thus
$\dim_\rho(\partial_\rho\Omega)=\max\{n-1,s/(1+\beta)\}$ and
$\Dim_\rho(\partial_\rho\Omega)=\max\{n-1, t/(1+\beta)\}$.
\end{ex}

Below, we construct an example to show that all inequalities in
Theorem \ref{estimates2} can be strict.

\begin{ex}
There exist domains $\Omega$ and densities $\rho$ such that all four
of the inequalities \eqref{est21}-\eqref{est24} are strict. 

Let $\Omega=\{(x,y)\in\R^2\,:\,y>0\}$ be the upper half-plane and
fix $-1<q<s<p<0$.
Define $A_k=\{(n2^{-2k},2^{-2k})\,:\,n\in\Z\}$,
$B_k=\{(n2^{-2k+1},2^{-2k+1})\,:\,n\in\Z\}$, and
$r_k=2^{-100k^2}$ for all $k\in\N$. Then choose a continuous density
$\rho\colon\Omega\rightarrow(0,\infty)$ so that $\rho(z)=2^{-2kq}$ if
$z\in A_k$, $\rho(z)=2^{-(2k+1)p}$ if $z\in B_k$ and $\rho(z)=d(z)^s$
if $z\in\Omega\setminus \left( \cup_{k\in\N}\cup_{x\in A_k\cup
  B_k}B_d(x,r_k) \right)$. Then $i^+(x)\ge p$ and $i^-(x)\le q$ for all
$x\in\intbndryOmega$. Thus
\begin{align*}\sup_{\beta>-1}d^+(\beta)/(1+\beta)=\sup_{\beta>-1}D^+(\beta)/(1+\beta)\le1/(1+p),\\ 
\sup_{\beta>-1}d^-(\beta)/(1+\beta)=\sup_{\beta>-1}D^-(\beta)/(1+\beta)\ge1/(1+q).
\end{align*}
On the
other hand, it is easy to see that
$\dim_\rho(\partial_\rho\Omega)=\Dim_\rho(\partial_\rho\Omega)=1/(1+s)$.
\end{ex}

Our next example shows that the claims \eqref{1} and \eqref{3} of Lemma \ref{thm:estimates1} do not necessarily hold without the Assumption \ref{assu:gamma}.

\begin{ex}\label{ex:assugamma}
Let $0<\alpha_n<1$ be a sequence satisfying $\sum_{n=1}^\infty\alpha_n<\infty$. We construct a Cantor set $C\subset[0,1]$ with the following procedure: Let $I_\varnothing=[0,1]$, $\ell_0=1$, $I_0=[0,(1-\alpha_1)/2]$, $I_1=[(1+\alpha_1)/2,1]$ and $\ell_1=(1-\alpha_1)/2$. Suppose $n\in\N$, $\iii\in\{0,1\}^n$, and  that $I_\iii$ with $\diam(I_\iii)=\ell_n$ has been defined. We then define inductively $I_{\iii0}$ and $I_{\iii1}$ to be the subintervals of $I_\iii$ with length $\ell_{n+1}=\ell_n(1-\alpha_n)/2$ such that $I_{\iii0}$ has the same left endpoint as $I_\iii$ and $I_{\iii1}$ has the same right endpoint as $I_\iii$. We also denote by $J_\iii$ the interval between $I_{\iii0}$ and $I_{\iii1}$. The $(\alpha_n)$-Cantor set $C=C(\alpha_n)$ is then defined as
\[C=\bigcap_{n\in\N}\bigcup_{\iii\in\{0,1\}^n}I_\iii\,.\]

For each $n\in\N$, we may choose $0<h_n<\ell_n$ such that
\begin{equation}\label{dim0estimate}
2^n\ell_{n}^{1/n}h_{n}^{1/n}\le1\,.
\end{equation} 
We also require that $h_{n+1}\le h_{n}$.

Next we define a density $\rho$ on the upper half-plane $H$. For each $n$, and $\iii\in\{0,1\}^n$, let $T_{\iii}$ and $U_\iii$ be the
isosceles triangles with base $J_\iii$ and heights $h_n$ and $h_n/2$
respectively. For $\iii=\varnothing$, we define $T_\varnothing=\{(x,y)\in H\,:\,x<0\text{ and }y<-2x\}\cup\{(x,y\in H\,:\,x>1\text{ and }y<2x-2\}$ and $U_\varnothing=\{(x,y)\in H\,:\,(x,2y)\in T\}$. 
We define
\begin{equation*}
\rho(z)=
\begin{cases}
&d(z)^{-1},\text{ if }z\in\cup_{\iii} U_\iii\,\\
&d(z),\text{ if }z\notin\cup_{\iii} T_\iii\,, 
\end{cases}
\end{equation*}
where the union is over all $\iii\in\{\varnothing\}\cup_{n\in\N}\{0,1\}^n$
Moreover, we extend $\rho$ continuously into the strips $T_{\iii}\setminus U_{\iii}$ such that it is monotone in the $y$-coordinate. 

It is now easy to see that $\partial_\rho H=C$ and that $i^+=1$ on $\partial_\rho H$. Since $\sum_{n}\alpha_n<\infty$, it follows that $\Le(C)>0$ and thus in particular $\dim_d(C)=\Dim_d(C)=1$. If $n\in\N$ and $\iii\in\{0,1\}^n$, we can connect any two points of $C\cap I_\iii$ by two vertical segments of length $h_n$ and the horisontal segment between their tops such that apart from endpoints, these segments lie completely outside $\cup_{\iii}T_\iii$. This implies $\diam_\rho(C\cap I_\iii)\le  h_{n}^2+\ell_n h_n\le 2 \ell_n h_n$ and thus for each $n$, there is a covering of $\partial_\rho H$ by $2^n$ sets of $\rho$-diameter $2\ell_n h_n$. 
Combining with \eqref{dim0estimate} and letting $n\rightarrow\infty$ yields $\dim_\rho(\partial_\rho H)=\Dim_\rho(\partial_\rho H)=0$. This shows that the claims \eqref{1} and \eqref{3} of Lemma \ref{thm:estimates1} are not valid. 
\end{ex}

The final example of this section shows that neither the estimates \eqref{2}--\eqref{4} of
Lemma \ref{thm:estimates1} nor \eqref{est23}--\eqref{est24} of Theorem
\ref{estimates2} need hold if
$\Omega$ is not a John domain.

\begin{ex}\label{ex:Koch}
We construct a snowflake type domain
$\Omega\subset\R^2$ that does not satisfy \eqref{2} nor
\eqref{4} of Lemma \ref{thm:estimates1}. 

To begin with, we fix $0<s<1/2$ and let
$0<\alpha_1<1/2$.
We
start with an equilateral triangle with sides of length $l_0=1$ and
replace the middle $\alpha_1$-th portion of each of the sides by two
segments of length $l_1 = (1-\alpha_1)/2$. 
We continue inductively.
At the step $k$, we have $3\cdot 4^{k}$
segments of length $l_k$ and we replace the middle $\alpha_k$-th
portion of each of these segments by two line segments of length
$l_{k+1}=l_k(1-\alpha_{k+1})/2$, see Figure \ref{fig:koch1}. The numbers
$\alpha_k$ are defined so that
\begin{equation}\label{akdef}
\alpha_{k+1}=l_{k}^{1-2s}(1-\alpha_{k+1})/2\,.
\end{equation}
Observe that $\alpha_k\downarrow 0$ as $k\rightarrow\infty$.
We denote by $\Omega_k$ the domain bounded by the
line segments at step $k$ and define
$\Omega=\cup_{k\in\N}\Omega_k$.
We denote by $\Sigma\subset\intbndryOmega$ the part of the boundary
that joins two vertexes of the original equilateral triangle and does
not contain the third vertex. 
For notational convenience, we consider only points of
$\Sigma$. This does not affect the generality as
$\intbndryOmega\setminus\Sigma$ consist of two translates of
$\Sigma$. For $x\in\Sigma$, we let
$a(x)\in\{1,2,3,4\}^\N$ denote its coding or ``address'' arising from the
enumeration of the segments in each level as in the
Figure \ref{fig:koch1}. Note that this address is unique outside a
countable set of
points.

\begin{figure}
    \begin{center}
      \includegraphics[width=1.0\textwidth]{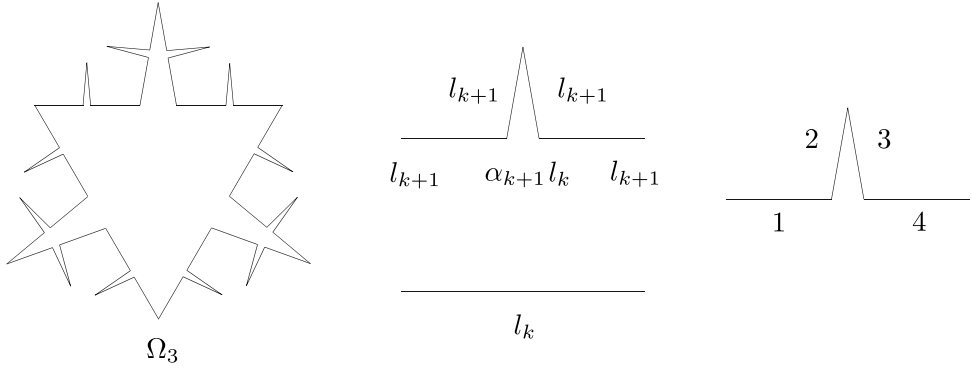}
    \end{center}
    \caption{Three figures concerning Example \ref{ex:Koch}: domain
      $\Omega_3$ (left), construction at level $k+1$ (middle) and
      enumeration of the segments (right).}
    \label{fig:koch1}
\end{figure}

Next we define
$\rho(z)=d(z)^{-1/2}$ for all $z\in\Omega$ and consider the set
$A=\{x\in\Sigma\,:\,a(x)\in\{2,3\}^\N\}$. It is easy to see
that there are numbers $0<D_1<D_2<\infty$, so that
$\dim_d(A)=D_1=\Dim_d(A)$ and $\dim_d(\intbndryOmega)=D_2=\Dim_d(\intbndryOmega)$ (actually
$D_1=1$ and $D_2=2$ but this is not essential). If we show that
\begin{equation}\label{Adim}
\dim_\rho(A)=\Dim_\rho(A)=D_1/s,
\end{equation}
then it follows that the claims \eqref{2} and
\eqref{4} of Lemma \ref{thm:estimates1} do not hold. Observe that
$i^-(x)=i^+(x)=-1/2$ for all $x\in\intbndryOmega$.

Let $x\in A$ and
$y\in\closuredOmega$
and choose the smallest $k\in\N$ so that
$l_k<2d(x,y)$. Let $z$ be as in Figure
\ref{fig:koch2}, \emph{i.e.} $z$ is the ``base point'' of a cone of $\Omega_k$
with ``side-length'' $l_k$ which is closest to $x$. Then
\begin{figure}
    \begin{center}
      \includegraphics[width=0.7\textwidth]{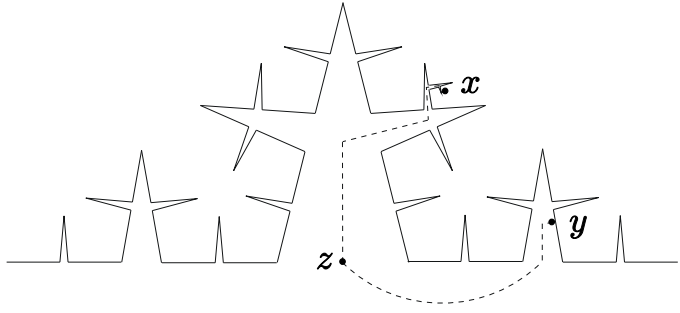}
    \end{center}
    \caption{Selection of the``base point" $z$ in Example \ref{ex:Koch}.}
    \label{fig:koch2}
\end{figure}
\begin{align*}
d_\rho(x,z)\leq
c_0\sum_{n=k}^\infty \alpha_n^{-1/2} l_{n}^{1/2}=c_0
\sum_{n=k}^{\infty}l_{n-1}^s\leq c_1 l_{k}^s\leq 2^s c_1 d(x,y)^s.
\end{align*}
Here the first equality follows from \eqref{akdef} and the former estimate holds because $l_k/4< l_{k+1}<l_k/2$ for all
$k\in\N$. By a similar argument, it follows that $d_{\rho}(z,y)\leq c_2
l_{k}^s\leq c_3
d(x,y)^s$. Thus $d_\rho(x,y)\leq c d(x,y)^s$.
On the other hand, it is clear that
$d_\rho(x,y)\geq c_4 l_{k}^s\geq c' d(x,y)^s$, since $a(x)\in\{2,3\}^\N$.
Thus, we have $c' d_\rho(x,y)\leq d(x,y)^s\leq
 c d_\rho(x,y)$, in other words $B_\rho(x,c'r)\subset B_d(x,r^s)\subset
B_\rho(x,cr)$, for all $x\in A$ and $y\in\closuredOmega$ where the
constants $0<c',c<\infty$ are independent of the points $x$ and $y$.
The claim \eqref{Adim} now follows from Lemma \ref{dimlemmarho}.
\end{ex}

\begin{rem}
Suppose that  $A\subset
\{x\in\partial_\rho\Omega\,:\,i^{-}(x)\geq\beta\}$ has the
following accessibility property for some $1\leq \lambda<-1/\beta$: For
each $x\in\intbndryOmega$ there are $0<r,c<\infty$ such that for all $y\in
B_d(x,r)\cap A$ there exists a curve $\gamma$ joining $x$ and
$y$ so that $d(\gamma(t),\intbndryOmega)\geq c\min\{t^\lambda,
(\length(\gamma)-t)^\lambda\}$  for all $0<t<\length(\gamma)$. Then the
proof of Lemma \ref{thm:estimates1} with trivial modifications implies
$(1+\lambda\beta)\dim_\rho(A)\leq\dim_d(A)$ and
$(1+\lambda\beta)\Dim_\rho(A)\leq\Dim_d(A)$. On the other hand, if
for each $x\in B\subset
\{x\in\partial_\rho\Omega\,:\,i^+(x)\leq\beta\}$ there are
$0<r,c<\infty$ so that for all curves $\gamma$ with $\gamma(0)=x$ we
have $d(\gamma(t),\intbndryOmega)<c t^\lambda$ for $0<t<r$, then we
get $(1+\lambda\beta)\dim_\rho(B)\geq\dim_d(B)$,
$(1+\lambda\beta)\Dim_\rho(B)\geq\Dim_d(B)$. The previous example
shows that these estimates are sharp.
\end{rem}

\section{Conformal densities}\label{conf}

The results in the last section, are based on estimates of the
quantities $i^+(x)$ and $i^-(x)$ which are defined as internal
limits when $\Omega\ni y\rightarrow x\in\intbndryOmega$. This causes a
lack of the generality; it is quite possible that $i^{+}(x)=0$ and
$i^{-}(x)=-1$ for all $x\in\intbndryOmega$. (For instance, choose
$\beta=-1$, $\lambda=0$ in the forthcoming Example \ref{ex:multifractal}.) 
However, if we have additional information on the geometry of
$( \closurerhoOmega ,d_\rho)$, then it might be enough to consider the
ratios $\log\rho(y)/\log d(y)$ along some fixed curves or cones.
The purpose of this section is to show that this is the case
for so called conformal densities which
arise naturally in connection with conformal and quasiconformal
mappings and their generalisations, 
see \cite{BKR}. 

A density $\rho$ on $\mathbb{B}^n$ is called a conformal density if
there are constants $1\le c_0, c_1\le\infty$ such that for each
$x\in\mathbb{B}^n$ and for all $y \in B_e(x,d(x)/2)$ we have
\begin{equation}\label{HI}
c_{0}^{-1}\leq\rho(y)/\rho(x)\leq c_0,
\end{equation}
and moreover,
\begin{equation}\label{VG}
\mu_\rho(B_\rho(x,r))\leq c_1 r^n
\end{equation}
for all $r>0$.
Here $\mu_\rho$ is the measure given by
$\mu_\rho(E)=\int_{E}\rho^n\,d\Le^n$ for $E\subset \mathbb{B}^n$. 
In the literature, \eqref{HI} is often called the Harnack inequality, and one refers 
to \eqref{VG} as a volume growth condition.     
An important corollary of the conditions \eqref{HI}--\eqref{VG} is the following Gehring-Hayman type estimate: There exists $1\le
c<\infty$ such that 
\begin{equation}\label{GHtype2}
c^{-1}d_\rho(x,y)
\leq\int_{t=0}^{d(x,y)}\rho\left((1-t)x\right)\,dt
+\int_{t=0}^{d(x,y)}\rho\left((1-t)y\right)\,dt
\leq c d_\rho(x,y)
\end{equation}
for all $x,y\in\partial_\rho\mathbb{B}^n$.
See \cite[Theorem 3.1]{BKR} and also \cite{GH}.

Motivated by this estimate, we consider variants $k^-$ and $k^+$
of the quantities $i^-$ and $i^+$ for a density $\rho$ on
$\mathbb{B}^n$ at $x\in S^{n-1}$. Recall that
$k^-(x)=\liminf_{r\uparrow1}\log\rho(rx)/\log(1-r)$, and
$k^+(x)=\limsup_{r\uparrow1}\log\rho(rx)/\log(1-r)$.
Occasionally we also use $k^-$ and $k^+$ when $\Omega=\mathbb{H}$ is an open half
space and then the limits are considered along straight lines
orthogonal to the boundary of $\mathbb{H}$. The reduction to $k^\pm$ is possible since \eqref{GHtype2} is a much stronger condition than the Assumption \ref{assu:gamma} that was used earlier for the same purpose.

In the following result we only assume that
\eqref{HI} and \eqref{GHtype2} hold. Thus, the result applies to a
slightly larger collection of densities than the conformal
densities. See \cite{NT}, and also Example \ref{ex:multifractal}
to follow. 

\begin{thm}\label{thm:estimates4} Suppose that $\rho$ is a density on
  $\mathbb{B}^n$ that satisfies the conditions \eqref{HI} and
  \eqref{GHtype2}. Let $\beta>-1$,
\begin{align*}  
A\subset\{x\in\partial_\rho\mathbb{B}^n\,:\,k^{+}(x)\le\beta\},\\
B\subset\{x\in\partial_\rho\mathbb{B}^n\,:\,k^{-}(x)\ge\beta\},\\
C\subset\{x\in\partial_\rho\mathbb{B}^n\,:\,k^{-}(x)\le\beta\}. 
\end{align*}
Then
\begin{enumerate}
\item\label{C1}
  $(1+\beta)\dim_\rho(A)\geq
  \dim_d(A)$,
\item\label{C2}
  $(1+\beta)\dim_\rho(B)\leq
  \dim_d(B)$,
\item\label{C3}
  $(1+\beta)\Dim_\rho(A)\geq
  \Dim_d(A)$,
\item\label{C4}
  $(1+\beta)\Dim_\rho(B)
  \leq\Dim_d(B)$,
\item\label{C5}
  $(1+\beta)\Dim_\rho(C)
  \geq\dim_d(C)$.
\end{enumerate}
\end{thm}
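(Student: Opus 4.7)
The plan is to apply Lemma \ref{dimlemmaf} to the identity map $\iota$ between the subset $A$ (respectively $B$ or $C$) viewed as a metric space with the two metrics $d$ and $d_\rho$. The advantage of working intrinsically on the subset, rather than on the ambient completion as in Lemma \ref{dimlemmarho}, is that we only need to compare $d(x,y)$ and $d_\rho(x,y)$ for \emph{pairs} of points both lying in the set in question, which is exactly the form that the Gehring--Hayman estimate \eqref{GHtype2} provides. The key analytic translation is that, since $d((1-t)x)=t$ on the radial segment from $x\in S^{n-1}$ and $\log(1-r)<0$ near $r=1$, the ratio $\log\rho(rx)/\log(1-r)$ being less than $s$ (resp.\ greater than $s$) is equivalent to $\rho((1-t)x)>t^s$ (resp.\ $<t^s$) for $t$ small.

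For \eqref{C1} and \eqref{C3}, fix $s>\beta$. For each $x\in A$, the hypothesis $k^+(x)\le\beta<s$ gives some $r_0(x)>0$ with $\rho((1-t)x)\ge t^s$ for $0<t\le r_0(x)$; a standard countable partition $A=\bigcup_n A_n$ makes this threshold uniform on each piece. The lower bound in \eqref{GHtype2}, applied using only the radial integral at $x$, then gives
\[
d_\rho(x,y)\ge c^{-1}\int_0^{d(x,y)}\rho((1-t)x)\,dt\ge c'\,d(x,y)^{1+s}
\]
for $x,y\in A_n$ at small $d$-distance. Applying Lemma \ref{dimlemmaf} \eqref{dimlemma1} to $\iota\colon(A_n,d_\rho)\to(A_n,d)$ with H\"older exponent $1/(1+s)$, taking the union, and letting $s\downarrow\beta$ delivers both \eqref{C1} and, via \eqref{ppest}, \eqref{C3}. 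The dual statements \eqref{C2} and \eqref{C4} for $B\subset\{k^-\ge\beta\}$ proceed the same way using the upper bound in \eqref{GHtype2}, with one crucial difference: the upper Gehring--Hayman bound involves the radial integrals at \emph{both} $x$ and $y$, so we genuinely need $\rho((1-t)y)\le t^s$ simultaneously at both endpoints. This is automatic on each piece $B_n$ of the countable partition, and yields $d_\rho(x,y)\le c''\,d(x,y)^{1+s}$ and hence $(1+\beta)\dim_\rho(B)\le\dim_d(B)$ together with the packing analogue after the limit $s\uparrow\beta$.

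Part \eqref{C5} is the most delicate, and is where the real obstacle lies. For $x\in C$ the hypothesis $k^-(x)\le\beta<s$ only produces a \emph{sequence} $t_{i,x}\downarrow 0$ with $\rho((1-t_{i,x})x)\ge t_{i,x}^s$, which by itself is insufficient to bound the integral $\int_0^{t_{i,x}}\rho((1-t)x)\,dt$ from below. Here the Harnack inequality \eqref{HI} is essential: since $d((1-t_{i,x})x)=t_{i,x}$, \eqref{HI} propagates the pointwise lower bound to the interval $u\in[t_{i,x}/2,3t_{i,x}/2]$ in the form $\rho((1-u)x)\ge c_0^{-1}t_{i,x}^s$, which then yields
\[
d_\rho(x,y)\ge c^{-1}\int_{t_{i,x}/2}^{t_{i,x}}\rho((1-u)x)\,du\ge K\,t_{i,x}^{1+s}
\]
for any $y\in C$ with $d(x,y)\ge t_{i,x}$. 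Setting $r_i=K t_{i,x}^{1+s}$, this reads $B_\rho(x,r_i)\cap C\subset B_d(x,(r_i/K)^{1/(1+s)})$ along the sequence $r_i\downarrow 0$. After a countable partition ensuring uniform constants, Lemma \ref{dimlemmaf} \eqref{dimlemma2} applied to $\iota\colon(C,d_\rho)\to(C,d)$ gives $\dim_d(C)\le(1+s)\Dim_\rho(C)$, and $s\downarrow\beta$ finishes \eqref{C5}. The interplay between \eqref{HI} and \eqref{GHtype2} in this last step, converting a pointwise sequence condition into a genuine integral estimate via Harnack, is precisely what forces the conclusion to involve the packing dimension rather than the Hausdorff dimension on the right-hand side.
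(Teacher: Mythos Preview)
Your proof is correct and takes essentially the same approach as the paper: both use \eqref{GHtype2} to convert radial density bounds into comparisons between $d_\rho$ and $d$, invoke \eqref{HI} in part \eqref{C5} to upgrade the pointwise sequence bound to an integral bound, and then appeal to the dimension comparison lemmas. The only cosmetic difference is that you apply Lemma~\ref{dimlemmaf} directly to the identity map on the subset (using that Hausdorff and packing dimensions are intrinsic), whereas the paper routes through Lemma~\ref{dimlemmarho} via diameter estimates on $B_\rho(x,r)$ and $B_d(x,r)\cap B_n$---but since Lemma~\ref{dimlemmarho} is itself derived from Lemma~\ref{dimlemmaf}, and the paper explicitly notes the intrinsic reduction for \eqref{C2} and \eqref{C4}, the two arguments are the same in substance.
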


\begin{proof}
The claims \eqref{C1}--\eqref{C4} have proofs very similar to the
proofs of the corresponding statements of Lemma
\ref{thm:estimates1}. 
We first apply \eqref{GHtype2} to conclude that for each
$x\in A$ and $y\in \mathbb{B}^n\setminus
B_d(x,r)$, we have
\[d_\rho(x,y)\geq c^{-1}\int_{t=0}^r\rho((1-t)x)\,dt\geq
c^{-1}\int_{t=0}^r t^s\ge c_0 r^{1+s}\]
if $s>\beta$ and $r>0$ is small. This implies
$\diam_d(B_\rho(x,r))\leq c_1 r^{1/(1+s)}$ and the claims \eqref{C1} and
\eqref{C3} now follow by Lemma \ref{dimlemmarho} \eqref{rholemma2}.

To prove \eqref{C2} and \eqref{C4}, let $s<\beta$
and for $n\in\N$, denote 
\[B_n=\{x\in B\,:\,\rho((1-t)x)<t^s\text{ for all }0<t<1/n\}.\]
Using
\eqref{GHtype2}, we find $r_0>0$ so that
\[d_\rho(x,y)\leq c_2\int_{t=0}^{d(x,y)}t^s\le c_3 d(x,y)^{1+s}\] 
whenever $x,y\in B_n$ and $d(x,y)<r_0$. In other words,
$\diam_\rho(B_d(x,r)\cap B_n)\leq c_4 r^{1+s}$ when
$0<r<r_{0}^{1/(1+s)}$. Now the Lemma \ref{dimlemmarho} \eqref{rholemma1}
implies $(1+s)\dim_\rho(B_n)\leq\dim_d(B_n)$ and
$(1+s)\Dim_\rho(B_n)\leq\Dim_d(B_n)$. Note that it is enough to assume
$\liminf_{r\downarrow
    0} (\log\left(\diam_\rho(B_d(x,r)\cap A)\right))/(\log r) \geq \delta$ in
  Lemma \ref{dimlemmarho} \eqref{rholemma1} (since
  $\dim_{\partial_\rho\mathbb{B}^n}(A)=\dim_{(A,d_\rho)}(A)$ and
  $\Dim_{\partial_\rho\mathbb{B}^n}(A)=\Dim_{(A,d_\rho)}(A)$). The claims
  \eqref{C2} and \eqref{C4} now follow since $B=\cup_{n\in\N}$ and
  $s<\beta$ is arbitrary.

It remains to prove \eqref{C5}. Let $x\in C$ and $s>\beta$. Then
there is a sequence $0<r_i\downarrow0$ such that
$\rho((1-r_i)x)>r_{i}^s$ for all $i$. Combined with \eqref{HI}, this
gives 
\[\int_{t=0}^{r_i}\varrho((1-t)x)\,dt\ge c_5 r_{i}^{1+s}\]
and using also \eqref{GHtype2}, $\diam_d(B_\rho(x,c_6 r_{i}^{1+s}))\le
r_i$. Thus 
\[\limsup_{r\downarrow0}\frac{\log\diam_d(B_\rho(x,r))}{\log
r} \ge \frac{1}{1+s}\] 
and \eqref{C5} follows from Lemma \ref{dimlemmarho} \eqref{rholemma4}. 
\end{proof}

\begin{rems}\label{crems}
\textit{a)} Using the claims \eqref{C1}--\eqref{C4} of Theorem
\ref{thm:estimates4} one may derive multifractal type
formulas completely analogous to \eqref{est21}--\eqref{est24}.
Using
\eqref{C5}, we have moreover, that $\Dim_\rho(\partial_\rho
B_d(0,1))\geq\sup_{\beta>-1}\frac{e^-(\beta)}{1+\beta}$ where
\begin{equation}\label{limite}
  e^-(\beta)=\dim_d (\{x\in\partial_\rho
B_d(0,1)\,:\,k^-(x)\le \beta\}).
\end{equation}
Example \ref{hassu} shows that this is sharp
in the sense that one can not replace $\dim_d$ by $\Dim_d$ in defining
$e^-(\beta)$ even if $\rho$ is a conformal density.\\
\textit{b)} We formulated the above result for densities defined on
$\mathbb{B}^n$. The same proof goes through for any John domain
$\Omega\subset\Rn$ if the condition \eqref{GHtype2} is replaced by
\[c^{-1}d_\rho(x,y)\leq\int_{t=0}^{d(x,y)}\rho(\gamma_x(t))\,dt+\int_{t=0}^{d(x,y)}\rho(\gamma_y(t))\,dt\le
c d_\rho(x,y),\]
where $\gamma_x$ is a fixed $\alpha$-cone with $\gamma_x(0)=x$ for
each $x\in\partial_\rho\Omega$. Actually, we could even weaken this
in the spirit of \eqref{assu:gamma} and assume only that for all
$\varepsilon>0$, we have
\[d_\rho(x,y)^{1+\varepsilon}\leq\int_{t=0}^{d(x,y)}\rho(\gamma_x(t))\,dt+\int_{t=0}^{d(x,y)}\rho(\gamma_y(t))\,dt\le
d_\rho(x,y)^{1-\varepsilon}\]
when $d(x,y)$ is small enough.\\
\textit{c)} Makarov \cite[Theorems 0.5, 0.6]{Mak89} proved results essentially similar to Theorem \ref{thm:estimates4} \eqref{C1}--\eqref{C2} in case $\beta>0$ and $\rho=|f'|$ for $f$ conformal. He also showed \cite[Theorem 0.8]{Mak89} that $k^-$ cannot be replaced by $k^+$ in \eqref{C2}.\\ 
\textit{d)} In \cite{BKR}, Bonk, Koskela, and Rohde proved the following deep
fact. If $\rho$ is a
conformal density on $\mathbb{B}^n$, then: 
\begin{equation}\label{BKR}
\text{There is }E\subset S^{n-1}\text{ with }\dim_d E=0\text{ such that
}\dim_\rho(\partial_\rho \mathbb{B}^n\setminus E)\leq n. 
\end{equation}
See \cite[Theorem 7.2]{BKR}. As a central
tool, they used an estimate analogous to Theorem
\ref{thm:estimates4} \eqref{C2}. In fact, combining Theorem \ref{thm:estimates4} \eqref{C2} and \cite[Theorem
5.2]{BKR} gives a simpler proof for \eqref{BKR} than the one given in
\cite{BKR}. However, their result is
quantitatively stronger than \eqref{BKR}.\\
\textit{e)} A generic situation in which Theorem \ref{thm:estimates4}
is stronger than Theorem \ref{estimates2} will be discussed in Example \ref{ex:multifractal}.
\end{rems}

\section{Further examples, remarks,  and questions}
\label{erq}

We first give the example mentioned in Remark \ref{crems} e) showing
that one can not replace $\dim_d$ by $\Dim_d$ in defining $e^-(\beta)$. We will make use of the following lemma. We formulate it in a more general setting, for future reference.

\begin{lemma}\label{lemma:aux}
Let $\Omega\subset\R^n$ be a $(2\alpha)$-John domain and $C\subset\intbndryOmega$. Suppose that $\widetilde{\rho}\colon(0,\infty)\rightarrow(0,\infty)$ is nonincreasing and and satisfies $\int_0^1 \widetilde{\rho}(t)dt < \infty$. Define $\rho(x)=\widetilde{\rho}(d(x,C))$ for $x\in\Omega$. Then for all $x\in C$ and
$0<r<\diam_d(\Omega)/2$, it holds
\begin{align}
\diam_d \left(B_\rho\left(x,\frac12\int_{t=0}^r\widetilde{\rho}(t) dt\right)\right)\leq 2r,\label{akuankka}\\
\diam_\rho\left(B_d(x,r)\right)\leq c_1 \int_{t=0}^{c_2 r}\widetilde{\rho}(t)\,dt\label{hessuhopo}
\end{align}
for some constants $0<c_1,c_2<\infty$ that depend only on $\alpha$ and
$n$.
\end{lemma}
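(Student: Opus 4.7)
The plan is to exploit the monotonicity of $\widetilde\rho$ through two complementary geometric comparisons. For \eqref{akuankka}, I would use that along any curve $\gamma$ starting at $x\in C$, parametrised by arclength, one has $d(\gamma(t),C)\le d(\gamma(t),x)\le t$, so monotonicity forces $\rho(\gamma(t))\ge\widetilde\rho(t)$. Setting $R=\tfrac{1}{2}\int_0^r\widetilde\rho(t)\,dt$, given $y\in B_\rho(x,R)$ and any curve $\gamma\colon[0,L]\to\closuredOmega$ from $x$ to $y$ with $\ell_\rho(\gamma)<R$, integrating this lower bound forces $L<r$; otherwise $\ell_\rho(\gamma)\ge\int_0^r\widetilde\rho(t)\,dt=2R$, contradicting the choice of $\gamma$. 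Hence $d(x,y)\le L<r$, and the triangle inequality yields $\diam_d(B_\rho(x,R))\le 2r$.

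For \eqref{hessuhopo}, my plan is to apply Lemma \ref{lemma:john} to the $2\alpha$-John domain $\Omega$, obtaining $N=N(\alpha,n)$, $c=c(\alpha)$, and points $x_1,\ldots,x_N\in\closuredOmega$ such that $B_d(x,r)\subset\bigcup_{i=1}^N B_\alpha(x_i,cr)$. For each $y\in B_\alpha(x_i,cr)$ I would take an $\alpha$-cigar $\gamma$ from $x_i$ to $y$ with $\ell(\gamma)\le cr$. Since $C\subset\intbndryOmega$, along $\gamma$ we have $d(\gamma(t),C)\ge d(\gamma(t))\ge\alpha\min\{t,\ell(\gamma)-t\}$, so monotonicity produces the upper bound $\rho(\gamma(t))\le\widetilde\rho(\alpha\min\{t,\ell(\gamma)-t\})$. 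Splitting the integral at $\ell(\gamma)/2$ and substituting $s=\alpha t$ gives
\[
  \ell_\rho(\gamma)\le\frac{2}{\alpha}\int_0^{\alpha cr/2}\widetilde\rho(s)\,ds,
\]
and hence $\diam_\rho(B_\alpha(x_i,cr))$ enjoys an integral bound of the same form up to a factor of $2$.

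The final step, and the only subtle one, is to pass from these $N$ per-quasi-ball diameter bounds to a diameter bound for $B_d(x,r)$ itself. Since $d_\alpha$ is only a quasi-distance (it need not satisfy the triangle inequality), I would argue instead via connectedness: $(\closuredOmega,d)$ is a length space so $B_d(x,r)$ is connected, and any two of its points can therefore be chained through at most $N$ of the covering quasi-balls $B_\alpha(x_i,cr)$. Summing the per-set bounds gives \eqref{hessuhopo} with $c_1=4N/\alpha$ and $c_2=\alpha c/2$, each depending only on $\alpha$ and $n$. This connectedness--chaining step is exactly the maneuver already used implicitly in \eqref{poo} within the proof of Lemma \ref{thm:estimates1}, and constitutes the only non-computational part of the argument.
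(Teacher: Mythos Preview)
Your proof is correct. For \eqref{hessuhopo} your argument coincides with the paper's almost verbatim: both invoke Lemma~\ref{lemma:john}, estimate $\rho$ along $\alpha$-cigars via $d(\gamma(t),C)\ge d(\gamma(t))\ge\alpha\min\{t,\ell(\gamma)-t\}$, and then chain through the $N$ covering quasi-balls using connectedness of $B_d(x,r)$.

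For \eqref{akuankka}, however, your route is genuinely simpler than the paper's. The paper introduces the height $h=h(\gamma)$ of the curve, derives the mixed lower bound $\ell_\rho(\gamma)\ge\tfrac12\int_0^h\widetilde\rho+\tfrac12 d\,\widetilde\rho(h)$, and then splits into the cases $h\ge d$ and $h<d$. You bypass all of this with the single pointwise observation $d(\gamma(t),C)\le d(\gamma(t),x)\le t$, which immediately yields $\rho(\gamma(t))\ge\widetilde\rho(t)$ and hence $\ell_\rho(\gamma)\ge\int_0^{L}\widetilde\rho(t)\,dt$. This is more elementary and in fact gives the stronger conclusion $\ell_\rho(\gamma)\ge\int_0^{d(x,y)}\widetilde\rho(t)\,dt$ without the factor $\tfrac12$; the paper's height-based argument buys nothing extra here.
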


\begin{proof}
Let $x\in C$ and $y\in\closuredOmega$. Denote $d=d(x,y)$ and suppose that $\gamma$ is
a curve joining $x$ and $y$. To prove \eqref{akuankka}, it suffices to
show that
\begin{equation}\label{tupuankka}
\length_\rho(\gamma)\ge\frac12 \int_{t=0}^{d}\widetilde{\rho}(t)\,dt.
\end{equation}
Let $h=h(\gamma)=\max_{0\leq t\leq
  L}d(\gamma(t))$ where $L=\length(\gamma)$. Then
$\length_\rho(\gamma)\geq\tfrac12 \int_{t=0}^h\widetilde{\rho}(t)\,dt+\tfrac12 d\rho(h)$. If
$h\geq d$ the estimate \eqref{tupuankka} clearly holds. If $h<d$,
then $d\widetilde{\rho}(h)\geq\int_{t=h}^{d}\widetilde{\rho}(t) dt$ since $\widetilde{\rho}$ is nonincreasing
and consequently
\[\length_\rho(\gamma)
\geq\frac12\left(\int_{r=0}^{h}\widetilde{\rho}(r)\,dr+d\widetilde{\rho}(h)\right)
  \ge\frac12\int_{t=0}^d\widetilde{\rho}(t)\,dt.\]
This settles the proof of \eqref{akuankka}.

To prove \eqref{hessuhopo}, let $x\in C$ and $r>0$. We use
Lemma \ref{lemma:john} to cover  
$B_d(x,r)$ with sets
$B_{\alpha}(x_i,c r)$, $i=1,\ldots,N=N(n,\alpha)$. Let $y\in
B_{\alpha}(x_i,c r)$
and pick an $\alpha$-cigar $\gamma$ with $\length(\gamma)\leq c r$
joining $y$ to $x_i$. Now
\begin{equation}\label{hessuhopotod}
d_\rho(y,x_i) \leq \int_\gamma\widetilde{\rho}(d(z,C))|dz|\leq 2\int_{t=0}^{c
  r/2}\widetilde{\rho}(\alpha t)\,dt=\frac2\alpha\int_{t=0}^{\alpha c
  r/2}\widetilde{\rho}(t)\,dt.
\end{equation}
As $B_d(x,r)$ is (path-)connected and is covered by $N$ sets of the type
$B_\alpha(x_i,c r_i)$, we arrive at
$\diam_\rho(B_d(x,r))\leq(4N/\alpha)\int_{t=0}^{\alpha
  cr/2}\widetilde{\rho}(t)\,dt$
proving the claim.
\end{proof}

\begin{ex}\label{hassu}
We show that $\dim_d$ cannot be replaced by $\Dim_d$ in \eqref{limite}
even if $\rho$ is a conformal density. 

We first fix numbers $0<a<b<1/2$,
$-1<\lambda<\eta<0$, and $\xi$ such that
\begin{equation}\label{eq:viimonen}
a^{1+\lambda}=b^{1+\eta}=\xi, 
\end{equation}
and
\begin{equation}\label{kaikkiseuraatasta}
-\log 2<\log\xi<-\tfrac12\log 2.
\end{equation}
Let us also pick natural numbers $n_1<N_1<n_2<N_2<n_3<N_3<\ldots$. We let
$C\subset S^1$ denote a Cantor set constructed as follows (See the construction in Example \ref{ex:assugamma}). We start
with an arc of length $1$ and remove an arc of length $1-2a$ from
the middle. Next, we remove arcs of length $a(1-2a)$ from
the middle of the two remaining arcs. We iterate this
construction for $n_1$ steps. After these $n_1$ steps, we have
$2^{n_1}$ arcs of length $a^{n_1}$. At the step $n_1+1$,
we remove arcs of relative length $1-2b$ from the middle of
each of these arcs. We continue the construction with the
parameter $b$ for $N_1-n_1$ steps. Then we use again the
parameter $a$ for $n_2-N_1$ steps and so on. 
We denote by $E_{k,1}\ldots, E_{k,2^k}$ the arcs remaining after
$k$ steps and denote by $\ell_k$ the length of these arcs.
What remains at the
end is the Cantor set $C=\cap_{k\in\N}\cup_{i=1}^{2^k}E_{k,i}$. 

Let $r_0=R_1=1$, $r_1=\ell_{n_1}=a^{n_1}$,
$R_2=\ell_{N_1}=a^{n_1}b^{N_1-n_1}$,
$r_2=\ell_{n_2}=a^{n_1+n_2-N_1}b^{N_1-n_1}$ and so
on. Thus $r_i$ (resp. $R_i$) is the length of a construction interval
of $C$ of level $n_i$ (resp. $N_{i-1}$). We define
$\rho(x)=\widetilde{\rho}(\dist(x,C))$ for all $x\in\mathbb{B}^2$,
where $\widetilde{\rho}$ is the function defined by
\begin{equation*}
\widetilde{\rho}(t)=\begin{cases}
\left(\frac{R_1 R_2\cdots R_k}{r_0 r_1\cdots
  r_{k-1}}\right)^{\eta-\lambda}t^{\lambda},\quad r_k\leq t\leq R_k,\\ 
\left(\frac{R_1 R_2\cdots R_k}{r_0 r_1\cdots
    r_{k}}\right)^{\eta-\lambda}t^{\eta},
\quad R_{k+1}\leq t\leq r_k.
\end{cases}
\end{equation*}

Now, if $N_i/n_i, n_{i+1}/N_i\rightarrow\infty$ fast enough, it
is easy to see that $\dim_d C=-\log 2/\log a$ and $\Dim_d C=-\log
2/\log b$, see e.g. \cite[p. 77]{M}. Moreover, it then
follows that $k^-(x)=\lambda$ if $x\in C$ and $k^-(x)=0$
otherwise. 
Next, let $h_k=\int_{t=0}^{\ell_k}\widetilde{\rho}(t)\,dt$. Since
\begin{equation}\label{tilderho}
\widetilde{\rho}(\ell_k)\ell_k=\xi^k
\end{equation}
for all $k$ (combine \eqref{eq:viimonen} with the definitions), it follows that
\[\tfrac12 \xi^k=\tfrac12\widetilde{\rho}(\ell_k)\ell_k\le\int_{t=\ell_{k+1}}^{\ell_k}\widetilde{\rho}(t)
\le\widetilde{\rho}(\ell_{k+1})\ell_k\le a^\lambda\widetilde{\rho}(\ell_k)\ell_k
=a^\lambda \xi^k.\] 
Thus
\begin{equation}\label{ynnatasta}
\tfrac12 \xi^k\le h_k =\sum_{m\ge
  k}\int_{t=\ell_{m+1}}^{\ell_m}\widetilde{\rho}(t)\, dt\le c_0 \xi^k.
\end{equation}
From Lemma \ref{lemma:aux}, it follows that for each
$I=I_{k,i}$ we have
\begin{equation}\label{jatasta}
c_1 h_k\le\diam_{\rho}(I)\le c_2 h_k
\end{equation}
for some constants $0<c_1<c_2<\infty$. Let $\mu$ be the natural
probability measure on $C$ that satisfies $\mu(I_{k,i})=2^{-k}$. Then
\[\lim_{k\rightarrow\infty}\frac{\log\mu(I_{k,i})}{\log(\diam_\rho(I_{k,i}))}= \frac{-\log 2}{(1+\lambda)\log a},\]
using \eqref{ynnatasta} and \eqref{jatasta}. But this implies
$\dim_\rho(C)=\Dim_\rho(C) = (-\log 2)/((1+\lambda)\log a)$, see e.g. \cite[Proposition
10.1]{Falconer1997} and \cite[Corollary 3.20]{Cutler1995}. Thus, 
\begin{align*}
1<\Dim_\rho(C)&=\Dim_\rho(S^1)=\frac{-\log 2}{(1+\lambda)\log
  a}<\frac{-\log 2}{(1+\lambda)\log b}=\frac{\Dim_d(C)}{1+\lambda}\\
&=\sup_{\beta>-1}\frac{\Dim_d(\{x\in\partial_\rho
B_d(0,1)\,:\,k^-(x)\le \beta\}}{1+\beta}\,,
\end{align*}
recall \eqref{kaikkiseuraatasta}.

It remains to prove that $\rho$ is a conformal density. The condition
\eqref{HI} is clearly satisfied so we only have to verify
\eqref{VG}. We show this for $x\in C$ and $0<r<1$ (the general case
$x\in\mathbb{B}^2$ follows easily from this). Using \eqref{HI} we
may also assume that
$r=h_k$ for some $k\in\N$. For each $m\ge k$, we denote
\[A_m=\{y\in B_d(x,c_3\ell_k)\,:\,\ell_m\le d(y,C)\le c_3\ell_m\}.\]
Then $B_\rho(x,h_k)\subset\cup_{m\ge k}A_m$, for a suitable constant
$1<c_3<\infty$, recall \eqref{jatasta}. Moreover, it follows from
\eqref{HI} and \eqref{tilderho} that 
$c_4\xi^m/\ell_m\le\rho(y)\le c_5\xi^m/\ell_m$
for all $y\in A_m$,
where $0<c_4<c_5<\infty$ depend only on $a,b, \lambda$, and
$\eta$. Since $\mathcal{L}^2(A_m)\le c_6 2^{m-k}\ell_{m}^2$,
we arrive at
\[\mu_\rho(A_m)=\int_{A_m}\rho^2\,d\mathcal{L}^2\le c_7 2^{m-k}\xi^{2m}.\]
As $2\xi^{2}<1$ by \eqref{kaikkiseuraatasta}, this yields
\begin{align*}
\mu_\rho(B_\rho(x,h_k))\le\sum_{m\ge k}\mu_\rho(A_m)\le c_7 \sum_{m\ge k}2^{m-k}\xi^{2m}
\le c_8 \xi^{2k}\le c_9 h_{k}^2,
\end{align*} 
where the last estimate follows from \eqref{ynnatasta}.
\end{ex}

Below, we construct a ``multifractal type'' example and calculate the
Hausdorff dimension of the boundary using Theorem \ref{thm:estimates4}.

\begin{ex}\label{ex:multifractal}
We construct a domain and a conformal density that satisfies Gehring-Hayman condition \eqref{GHtype2} and compute the Hausdorff dimension of the boundary.

We define a density $\rho$ on the upper half-plane $H\subset\R^2$
(actually we
define $\rho(z)$ only for $z\in[0,1]\times(0,3]$ but the
definition is easily extended to the whole of $H$). Let
$-1<\beta,\lambda<0$, $\beta\neq\lambda$. 
We consider the triadic decomposition of $[0,1]$; Let $I_{\varnothing}=[0,1]$,
$I_{0}=[0,1/3]$, $I_{1}=[1/3,2/3]$, and $I_{2}=[2/3,1]$.
If $n\in\N$ and, $\textbf{i}\in\{0,1,2\}^n$, let $I_{\textbf{i}0},
I_{\textbf{i}1}, I_{\textbf{i}2}$ denote its triadic subintervals
  enumerated from left to right. For each such triadic interval
  $I=I_\textbf{i}$, let $Q_\textbf{i}=I\times\left[|I|,3|I|\right]$.
Next we define weights $\rho_\textbf{i}$ inductively by the rules
$\rho_\varnothing=1$ and
$\rho_{\textbf{i}0}=\rho_{\textbf{i}2}=3^{-\lambda}\rho_{\textbf{i}}$,
$\rho_{\textbf{i}1}=3^{-\beta}\rho_\textbf{i}$.

Let $\rho\colon[0,1]\times(0,3]\rightarrow(0,\infty)$ be a density such that 
$\rho(x_\textbf{i})=\rho_\textbf{i}$ if $x_\textbf{i}$ is the centre
point of $Q_\textbf{i}$. We also require that the condition
\eqref{HI} holds with some $c_0<\infty$. This is possible because
of the symmetric definition of $\rho_\textbf{i}$: 
If $I_\textbf{i}$ and $I_\textbf{j}$ are neighbouring intervals of the
same length, then
$3^{-|\beta-\lambda|}\leq|\rho_\textbf{i}/\rho_\textbf{j}|\leq
3^{|\beta-\lambda|}$.

We will next show that the Gehring-Hayman condition \eqref{GHtype2} holds for the
density $\rho$. Let $x,y\in[0,1]$ with $y-x=r>0$.
Let $\gamma_1$, $\gamma_2$, and $\gamma_3$ be the line segments
joining $(x,0)$ to $(x,r)$, $(x,r)$ to $(y,r)$, and 
$(y,r)$ to $(y,0)$, respectively. Then a direct calculation using the
definitions gives
\begin{eqnarray*}
\int_{\gamma_1}\rho(z)\,|dz|\le
c_1\int_{t=0}^rt^{\min\{\beta,\lambda\}}\frac{\rho(x,r)}{r^{\min\{\beta,\lambda\}}}\,dt\le 
c_2 r \rho(x,r),\\
\int_{\gamma_3}\rho(z)\,|dz|\le 
c_1\int_{t=0}^rt^{\min\{\beta,\lambda\}}\frac{\rho(y,r)}{r^{\min\{\beta,\lambda\}}}\,dt\le
c_2 r \rho(y,r).
\end{eqnarray*}
Combining these estimates with \eqref{HI}, we obtain
\begin{equation}\label{gammaest1}
c_3 \length_\rho(\gamma_i)\le \length_\rho(\gamma_2)\le c_4 \length_\rho(\gamma_i)
\end{equation}
for $i=1,3$. The condition \eqref{GHtype2} is satisfied if we can show
that $\length_\rho(\gamma)\ge
c\length_\rho(\gamma_2)$ for any curve joining $x$ and $y$ in $H$. Denote
$h=h(\gamma)=\max_{0<t<\length(\gamma)}d(\gamma(t))$. If $h\le r$, it follows
that $\length_\rho(\gamma)\ge c\length_\rho(\gamma_2)$ since $\rho$ is
essentially decreasing on each vertical line segment. More precisely
using \eqref{GHtype2} and the definitions of the weights
$\rho_\textbf{i}$, we get
\begin{equation}\label{rhoest}
\rho(a,tb)\ge c_5\rho(a,b)
\end{equation}
if $(a,b)\in[0,1]\times(0,3]$ and $0<t<1$. Now suppose
that $h>r$ and let $z=\gamma(t_0)$ where
$t_0=\min\{t>0\,:\,d(\gamma(t))=r\}$. If $d(z,\gamma_2)<r$, 
it follows easily from \eqref{HI} that $\length_\rho(\gamma)\ge
c\length_\rho(\gamma_2)$. If $d(z,\gamma_2)\ge r$, let $\eta$ be the
line segment joining $z$ to the closest point of $\gamma_2$. Then
\eqref{rhoest} implies $\length_\rho(\gamma)\ge c_5\length_\rho(\eta)\ge
c\length_\rho(\gamma_2)$ where the last estimate follows using
\eqref{HI}. This settles the proof of \eqref{GHtype2}. 

We will next compute the Hausdorff dimension of the boundary. Let $0\le t\le1$ and denote $A_t=\{x\in[0,1]\,:\,k^-(x)=k^+(x)=
t\beta+(1-t)\lambda\}$.
Then
\begin{align*}
A_t&= \left\{ x=\sum_{i\in\N}x_i 3^{-i}\,:\,x_i\in\{0,1,2\}\text{ and }
\lim_{n\rightarrow\infty}\#\{1\leq i\leq n\,:\,x_i=1\}/n=t\} \right\}.
\end{align*}
Using this expression, we get
\begin{equation}\label{selfsimdim}
\dim_d (A_t)=\Dim_d(A_t)=\frac{-t\log t+(t-1)\log((1-t)/2)}{\log 3}.
\end{equation}
Indeed, if $\mu_t$ is the unique Borel probability measure on $[0,1]$
that satisfies $\mu_t(I_{\textbf{i}1})=t\mu_t(I_\textbf{i})$ and
$\mu_t(I_{\textbf{i}0})=\mu_t(I_{\textbf{i}2})$ for all triadic intervals
$I_\textbf{i}$, then we have 
\[\lim_{r\downarrow 0}\frac{\log\mu_t((B_d(x,r))}{\log
r}=\frac{-t\log t+(t-1)\log((1-t)/2))}{\log 3}\] 
and this implies
\eqref{selfsimdim}. For instance, 
see \cite[Proposition 10.4]{Falconer1997}. 

Thus, from Theorem
\ref{thm:estimates4} and \eqref{selfsimdim}, 
we get  
\begin{equation}\label{maksimoitama}
\dim_\rho(A_t)=\Dim_\rho(A_t)=\frac{-t\log t
  +(t-1)\log((1-t)/2)}{(1+t\beta+(1-t)\lambda)\log 3}\,.
\end{equation} 
If  $f(\beta,\lambda)$ is the maximum of \eqref{maksimoitama} over
all $0\le t\le1$, then we conclude that 
\[\Dim_\rho(\partial_\rho H)\ge
\dim_\rho(\partial_\rho H)\ge f(\beta,\lambda).\] 

To finish this example, we show that for the Hausdorff dimension,
there is an equality in the above estimate. We give the proof in the
case $\beta<\lambda$, the case $\lambda<\beta$ can be handled with
similar arguments. First, we observe using Theorem
\ref{thm:estimates4} \eqref{C2} that 
\[\dim_\rho(\{k^-(x)\ge
\beta/3+2\lambda/3\})\le 1/(1+\beta/3+2\lambda/3)<f(\beta,\lambda),\]
where the strict inequality is obtained via 
differentiating \eqref{maksimoitama} at
$t=1/3$. On the other hand, if $t>1/3$, and $A^{-}_t=\{x\in[0,1]\,:\,k^-(x)\le
t\beta+(1-t)\lambda\}$, then 
\[A^{-}_t=\{x=\sum_{i\in\N}x_i 3^{-i}\,:\,
\limsup_{n\rightarrow\infty}\#\{1\leq i\leq n\,:\,x_i=1\}/n\ge t\}\}\]
and thus $\dim_d(A^{-}_t)\le (-t\log t+(t-1)\log((1-t)/2))/\log 3$. To
see this, observe that 
\[\liminf_{r\downarrow0}\frac{\log\mu_t(B_d(x,r))}{\log
r} \le \frac{-t\log t+(t-1)\log((1-t)/2))}{\log 3}\] 
for all $x\in A^{-}_t$ and
use \cite[Proposition 10.1]{Falconer1997}. Now, using the analogue of
\eqref{est23} for $k^{-}$ implies $\dim_\rho(\partial_\rho H)\le
f(\beta,\lambda)$, and consequently $\dim_\rho(\partial_\rho
H)=f(\beta,\lambda)$. 
\end{ex} 

\begin{rems}
\textit{a)} One can estimate the numbers $f(\beta,\lambda)$ numerically. For
instance, if $\beta=-1/2$ and $\lambda=-1/3$, then
$f(\beta,\lambda)\approx1.65$.\\ 
\textit{b)}
Inspecting \eqref{maksimoitama}, it is easy to 
see that 
\[\max\{1/(1+\beta/3+2\lambda/3),\log 2/((1+\lambda)\log
3)\}<f(\beta,\lambda)<1/(1+\min\{\beta,\lambda\})\] 
for all 
choices of $\beta$ and $\lambda$.\\   
\textit{c)} If above $\beta,\lambda>-1/2$, then it is not hard to see that
$\rho$ satisfies \eqref{VG} and
thus is a conformal density.
\end{rems}

We do not know if also
$\Dim_\rho(\partial_\rho H)\le f(\beta,\lambda)$:
\begin{q}
In Example \ref{ex:multifractal}, is it true that
$\Dim_\rho(\partial_\rho
H)=f(\beta,\lambda)$? 
\end{q}
We cannot use Theorem \ref{thm:estimates4} to solve this question
since it can be shown that $\Dim_d ( \{x\,:k^-(x)=\min\{\beta,\lambda\} ) =1$.

It is true that $\dim_\rho(\partial_\rho\mathbb{B}^n)\ge n-1$ for all
conformal densities $\rho$ defined on $\mathbb{B}^n$. This deep
fact was proved in \cite{BK}. A straightforward estimate using
Theorem \ref{thm:estimates4} and \eqref{HI} only implies that
$\dim_\rho(\partial_\rho\mathbb{B}^n)\ge c(n,c_0)>0$, where $c_0$ is the
constant in \eqref{HI}. See also \cite[Proposition 7.1]{BKR}. Next we
provide an example of a density $\rho$ on the upper half-plane $H$ such that
$\Dim_\rho(\partial_\rho H)=0$ and $\dim_d(\R\setminus\partial_\rho H)=0$.

\begin{ex}
We construct a density with $\Dim_\rho(\partial_\rho H)=0$ and $\dim_d(\R\setminus\partial_\rho H)=0$.

Given an interval $I\subset\R$, let $T_{I}$ and $U_I$ be the
isosceles triangles with base $I$ and heights $|I|$ and $|I|/2$
respectively. Denote
$S_{I}=T_{I}\setminus U_{I}$.

To begin with, let $I_1, I_2,\ldots$ be disjoint intervals so that
$C=\R\setminus\cup I_i$ forms a Cantor set (a nowhere dense closed set
without isolated points). Moreover, we assume that
$\sum_{i}\diam_d(I_i)\le 1$. Let $\rho(x)=\exp(-1/d(x))$ if
$x\in H\setminus\cup_{i}T_{I_i}$. We define
$\rho$ on each strip $S_{I_i}$ so that 
\begin{equation}\label{separation}
\ell_\rho(\gamma)\ge 1 
\end{equation}
for
any curve joining $U_{I_i}$ to $ H\setminus T_{I_i}$. 
We also require that $\rho$ extends continuously to the lower
boundary $\Gamma_{I_i}$ of $S_{I_i}$ (excluding the two endpoints of
$I_i$) and that 
\begin{equation}\label{glue1}
\ell_\rho(\gamma)=\infty
\end{equation}
if $\gamma$ is a curve on $S_{I_i}$ whose one endpoint is an
endpoint of $I_i$. We remark that the condition \eqref{glue1}
 as well as the condition \eqref{glue2} below, are only used to ensure
 that the assumption \eqref{ass2} is satisfied.

Now for
each $x,y\in C$ with $d(x,y)=d>0$, we have
\[d_\rho(x,y)\leq 2\int_{t=0}^d\exp(-1/t)\,dt+d\exp(-1/d)\le
3\exp(-1/d).\]
Thus, for each $n\in\N$, there is $\delta>0$ such that
$d_\rho(x,y)\le d(x,y)^n$ if $x,y\in C$ and $d(x,y)<\delta$. By Lemma
\ref{dimlemmaf}, this implies $\Dim_\rho(C)=0$.

We continue the construction inside the triangles $U_{I_i}$. We choose
intervals $I_{i,j}\subset I_i$ so that
$C_i=I_i\setminus\cup_{j}I_{i,j}$ is a Cantor set and
\begin{equation}\label{Cantordim}
\sum_{i,j}\diam_d(I_{i,j})^{1/2}\leq 1. 
\end{equation}
We define
$\rho(x)=f_{i}(x)\exp(-1/d(x))$ on $U_i\setminus\cup_{j}T_{I_{i,j}}$
where $f_{i}(x)$ is a continuous weight that is bounded if $x$ is
bounded away from the endpoints of $I_i$. Close to the
endpoints of $I_i$, we make $f_i$ so large that
\begin{equation}\label{glue2}
\ell_\rho(\gamma)=\infty
\end{equation}
if $\gamma$ is a curve on
$U_{I_i}$ whose one endpoint is an endpoint of $I_i$.
Also, we define $\rho$ on the strips $S_{I_i}$ so that
analogues of \eqref{separation} and \eqref{glue1} hold.
As above, we see that $\Dim_\rho(C_i)=0$ for all $i$.

We continue the construction inductively inside the triangles
$U_{I_{i,j}}$. At the step $n$, we obtain Cantor sets $C_{n,i}$
with $\Dim_\rho(C_{n,i})=0$. At the end, $\partial_\rho H$ will
be the union of all these Cantor sets. Replacing the exponent $1/2$ in
\eqref{Cantordim} by $1/n$ at the step $n$ implies that
$\dim_d(\R\setminus\partial_\rho H)=0$.
\end{ex}

It would be interesting to know, if the analogy of
\eqref{BKR} for the packing dimension holds.

\begin{q}
If $\rho$ is a conformal density on $\mathbb{B}^n$, does there exist a
set $A\subset S^{n-1}$ with $\Dim_d(A)=0$ such that
$\Dim_\rho(S^{n-1}\setminus A)\le n$? 
\end{q}

\textbf{Acknowledgements.} The first author was supported by the
Academy of Finland project \#120972 and he wishes to thank professor
Pekka Koskela. The second author was supported by the Academy of
Finland project \#126976.

\bibliographystyle{plain}
\bibliography{boundarydim}

\end{document}